\crefname{subsection}{Subsection}{Subsections}
\crefname{equation}{}{}
\crefname{enumi}{item}{items}
\tikzset{
  font={\fontsize{9pt}{12}\selectfont}}
\tikzset{>=latex} %
\colorlet{myred}{red!80!black}
\colorlet{myblue}{blue!80!black}
\colorlet{mygreen}{green!60!black}
\colorlet{mydarkred}{myred!40!black}
\colorlet{mydarkblue}{myblue!40!black}
\colorlet{mydarkgreen}{mygreen!40!black}
\tikzstyle{node}=[very thick,circle,draw=myblue,minimum size=22,inner sep=0.5,outer sep=0.6]
\tikzstyle{connect}=[->,thick,mydarkblue,shorten >=1]
\tikzset{ %
  node 1/.style={node,mydarkgreen,draw=mygreen,fill=mygreen!25},
  node 2/.style={node,mydarkblue,draw=myblue,fill=myblue!20},
  node 3/.style={node,mydarkred,draw=myred,fill=myred!20},
}
\def\nstyle{int(\lay<\Nnodlen?min(2,\lay):3)} %
\lstdefinestyle{code}{
  language=Julia, 
  showstringspaces=false,
  keywordstyle=\color{blue},
  commentstyle=\color{gray},
  columns=fullflexible,
  keepspaces=true
}
\newcommand{\codefile}[1]{\lstinputlisting[style=code]{#1}}
\newcommand{\codelink}[1]{\lstinputlisting[style=code]{#1}
  \noindent\begin{center}
  \filename@parse{#1}
  \href{\codeurl/\filename@base.\filename@ext}
  {\textcolor{blue}{\codelinktext}}
  \end{center}
}
\newcommand{\linkonly}[1]{
  \protect\filename@parse{#1}
  \href{\codeurl/\filename@base.\filename@ext}
  {\textcolor{blue}{\tt[\protect\filename@base.\protect\filename@ext]}}
}
\newtheorem{lemma}{Lemma}[section]
\newtheorem{example}[lemma]{Example}
\newtheorem{algo}[lemma]{Framework}
\DeclareMathOperator{\Hess}{Hess}
\DeclareMathOperator{\Trace}{Trace}
\DeclareMathOperator*{\smallsuml}{\textstyle\sum}
\DeclareMathOperator*{\smallprodl}{\textstyle\prod}
\providecommand{\1}{\mathbbm{1}}
\providecommand{\N}{\mathbb{N}}
\providecommand{\Z}{\mathbb{Z}}
\providecommand{\R}{\mathbb{R}}
\providecommand{\B}{\mathcal{B}}
\renewcommand{\P}{\mathbb{P}}
\providecommand{\bS}{\mathbb{S}}
\renewcommand{\S}{\mathcal{S}}
\providecommand{\D}{D}
\providecommand{\bV}{{\ensuremath{\mathbb{V}}}}
\newcommand{\F}{{\ensuremath{\mathcal{F}}}}
\newcommand{\Y}{\mathcal{Y}}
\newcommand{\Zz}{Z}
\newcommand{\Rr}{{\ensuremath{\mathcal{V}}}}
\newcommand{\diff}{\mathrm{d}}
\newcommand{\bcup}{\bigcup}
\newcommand{\Borel}{\mathcal{B}}
\newenvironment{approxtabular}{
	\tabular{|r|c|c|c|c|c|c|c|c|}
		\hline
		\multirowcell{4}{$d$}
		& \multirowcell{4}{$T$}
		& \multirowcell{4}{$N$}
		& \multirowcell{4}{Mean\\ of the\\ approx.\\ method } 
		& \multirowcell{4}{Standard\\ deviation of \\ the approx.\\ method } 
		& \multirowcell{4}{Reference \\ value } 
		& \multirowcell{4}{Relative\\$L^1$-approx.\\ error } 
		& \multirowcell{4}{Standard\\ deviation \\ of the \\ error } 
		& \multirowcell{4}{Average\\ runtime \\ in \\ seconds } \\
		&&&&&&&&\\
		&&&&&&&&\\
		&&&&&&&&\\
		\hline
}{
	\hline
	\endtabular
}
\DeclarePairedDelimiter{\pr}()
\DeclarePairedDelimiter{\br}[]
\DeclarePairedDelimiter{\abs}\lvert\rvert
\DeclarePairedDelimiter{\norm}\lVert\rVert
\DeclarePairedDelimiter{\ang}\langle\rangle
\newcommand{\bpr}[1]{\pr[\big]{#1}}
\newcommand{\bbpr}[1]{\pr[\Big]{#1}}
\newcommand{\bbbpr}[1]{\pr[\bigg]{#1}}
\newcommand{\bbr}[1]{\br[\big]{#1}}
\newcommand{\bbbr}[1]{\br[\Big]{#1}}
\newcommand{\bbbbr}[1]{\br[\bigg]{#1}}
\newcommand{\bbbbbr}[1]{\br[\Bigg]{#1}}
\newcommand{\babs}[1]{\abs[\big]{#1}}
\newcommand{\bang}[1]{\ang[\big]{#1}}
\begin{document}

\title{Deep learning approximations for non-local nonlinear PDEs with Neumann boundary conditions}

\author{
Victor Boussange$^{1,2}$, 
Sebastian Becker$^{3}$,
Arnulf Jentzen$^{4,5}$,\\
Benno Kuckuck$^{6}$,
and 
Lo{\"i}c Pellissier$^{7,8}$
\bigskip
\\
\small{$^1$ Unit of Land Change Science, Swiss Federal Research Institute}
\vspace{-0.1cm}\\
\small{for Forest, Snow and Landscape (WSL), Switzerland}
\smallskip
\\
\small{$^2$ Landscape Ecology, Institute of Terrestrial Ecosystems,}
\vspace{-0.1cm}\\
\small{Department of Environmental Systems Science, ETH Z\"urich,}
\vspace{-0.1cm}\\
\small{Switzerland, e-mail: \texttt{bvictor}\textcircled{\texttt{a}}\texttt{ethz.ch}}
\smallskip
\\
\small{$^3$ Risklab, Department of Mathematics, ETH Z\"urich,}
\vspace{-0.1cm}\\
\small{Switzerland, e-mail: \texttt{sebastian.becker}\textcircled{\texttt{a}}\texttt{math.ethz.ch}}
\smallskip
\\
\small{$^4$ School of Data Science and Shenzhen Research Institute of Big Data,}
\vspace{-0.1cm}\\
\small{The Chinese University of Hong Kong, Shenzhen, China,}
\vspace{-0.1cm}\\
\small{e-mail: \texttt{ajentzen}\textcircled{\texttt{a}}\texttt{cuhk.edu.cn}}
\smallskip
\\
\small{$^5$ Applied Mathematics: Institute for Analysis and Numerics,}
\vspace{-0.1cm}\\
\small{Faculty of Mathematics and Computer Science, University of M{\"u}nster,}
\vspace{-0.1cm}\\
\small{Germany, e-mail: \texttt{ajentzen}\textcircled{\texttt{a}}\texttt{uni-muenster.de}}
\smallskip
\\
\small{$^6$ Applied Mathematics: Institute for Analysis and Numerics,}
\vspace{-0.1cm}\\
\small{Faculty of Mathematics and Computer Science, University of M{\"u}nster,}
\vspace{-0.1cm}\\
\small{Germany, e-mail: \texttt{bkuckuck}\textcircled{\texttt{a}}\texttt{uni-muenster.de}}
\smallskip
\\
\small{$^7$ Unit of Land Change Science, Swiss Federal Research Institute}
\vspace{-0.1cm}\\
\small{for Forest, Snow and Landscape (WSL), Switzerland}
\smallskip
\\
\small{$^8$ Landscape Ecology, Institute of Terrestrial Ecosystems,}
\vspace{-0.1cm}\\
\small{Department of Environmental System Science, ETH Z\"urich,}
\vspace{-0.1cm}\\
\small{Switzerland, e-mail: \texttt{loic.pellissier}\textcircled{\texttt{a}}\texttt{usys.ethz.ch}}
\smallskip
}

\maketitle
\pagebreak
\begin{abstract}
Nonlinear partial differential equations (PDEs) are used to model dynamical processes in a large number of scientific fields, ranging from finance to biology. In many applications standard local models are not sufficient to accurately account for certain non-local phenomena such as, e.g., interactions at a distance. In order to properly capture these phenomena non-local nonlinear PDE models are frequently employed in the literature. In this article we propose two numerical methods based on machine learning and on Picard iterations, respectively, to approximately solve non-local nonlinear PDEs.
The proposed machine learning-based method is an extended variant of a deep learning-based splitting-up type approximation method previously introduced in the literature and utilizes neural networks to provide approximate solutions on a subset of the spatial domain of the solution. 
The Picard iterations-based method is an extended variant of the so-called \emph{full history recursive multilevel Picard} approximation scheme previously introduced in the literature and provides an approximate solution for a single point of the domain. Both methods are mesh-free and allow non-local nonlinear PDEs with Neumann boundary conditions to be solved in high dimensions.
In the two methods, the numerical difficulties arising due to the dimensionality of the PDEs are avoided by (i) using the correspondence between the expected trajectory of reflected stochastic processes and the solution of PDEs (given by the Feynman--Kac formula) and by (ii) using a plain vanilla Monte Carlo integration to handle the non-local term. 
We evaluate the performance of the two methods on five different PDEs arising in physics and biology.
In all cases, the methods yield good results in up to 10 dimensions with short run times.
Our work extends recently developed methods to overcome the curse of dimensionality in solving PDEs.
\end{abstract}

\pagebreak

\tableofcontents

\newpage
\section{Introduction}

In this article, we derive numerical schemes to approximately solve high-dimensional non-local nonlinear partial differential equations (PDEs) with Neumann boundary conditions. 
Such PDEs have been used to describe  a variety of processes in physics, engineering, finance, and biology, but can generally not be solved analytically, requiring numerical methods to provide approximate solutions. However, traditional numerical methods are for the most part computationally infeasible for high-dimensional problems, calling for the development of novel approximation methods.

The need for solving non-local nonlinear PDEs has been expressed in various fields as they provide a more general description of the dynamical systems than their local counterparts \cite{Kavallaris2018,delia2020numerical,Sunderasan2020}.
In physics and engineering, non-local nonlinear PDEs are found, e.g., in models of Ohmic heating production \cite{Lacey1995}, in the investigation of the fully turbulent behavior of real flows \cite{Caglioti1992}, in phase field models allowing non-local interactions \cite{Barone1971,Gajewski2003,Coleman1994,Hairer2016}, or in phase transition models with conservation of mass \cite{RUBINSTEIN1992,Stoleriu2011}; see \cite{Kavallaris2018} for further references.
In finance, non-local PDEs are used, e.g., in jump-diffusion models for the pricing of derivatives where the dynamics of stock prices are described by stochastic processes experiencing large jumps \cite{Merton1976,Chan1999,Kou2000,Abergel2010,Benth2001,Sunderasan2020,Cruz2020,Tankov2003}. Penalty methods for pricing American put options such as in Kou's jump-diffusion model \cite{Huang2013,Gan2020}, considering large investors where the agent policy affects the assets prices \cite{Amadori2003,Abergel2010}, or considering default risks \cite{Pham2009,Henry-Labordere2012} can further introduce nonlinear terms in non-local PDEs.
In economics, non-local nonlinear PDEs appear, e.g., in evolutionary game theory with the so-called replicator-mutator equation capturing continuous strategy spaces \cite{Oechssler2001,Kavallaris2017,Hamel2020,Alfaro2016,Alfaro2019} or in growth models where consumption is non-local \cite{Banerjee2021}.
In biology, non-local nonlinear PDEs are used, e.g., to model processes determining the interaction and evolution of organisms. Examples include models of morphogenesis and cancer evolution \cite{Lorz2013,Chen2020,Villa2021}, models of gene regulatory networks \cite{Pajaro2017}, population genetics models with the non-local Fisher--Kolmogorov--Petrovsky--Piskunov (Fisher--KPP) equations \cite{FISHER1937,Hamel2001,Bian2017,Perthame2007,Berestycki2009b,Houchmandzadeh2017,Wang2021}, and quantitative genetics models where populations are structured on a phenotypic and/or a geographical space \cite{Burger1994,Genieys2006a,Berestycki2016,Nordbotten2016,Nordbotten2018,Roques2016,Doebeli2010,Nordbotten2020}. In such models, Neumann boundary conditions are used, e.g., to model the effect of the borders of the geographical domain on the movement of the organisms.

Real world systems such as those just mentioned may be of considerable complexity and accurately capturing the dynamics of these systems may require models of high dimensionality \cite{Doebeli2010}, leading to complications in obtaining numerical approximations.
For example, the number of dimensions of the PDEs may correspond in finance to the number of financial assets (such as stocks, commodities, exchange rates, and interest rates) in the involved portfolio; in evolutionary dynamics, to the dimension of the strategy space; and in biology, to the number of genes modelled \cite{Pajaro2017} or to the dimension of the geographical or the phenotypic space over which the organisms are structured.
Standard approximation methods for PDEs such as finite difference approximation methods, finite element methods, spectral Galerkin approximation methods, and sparse grid approximation methods all suffer from the so called \emph{curse of dimensionality} \cite{Bellman1957}, meaning that their computational costs increase exponentially in the number of dimensions of the PDE under consideration. 

Numerical methods exploiting stochastic representations of the solutions of PDEs can in some cases overcome the curse of dimensionality. Specifically, simple Monte Carlo averages of the associated stochastic processes have been proposed a long time ago to solve high-dimensional linear PDEs, such as, e.g., Black--Scholes and Kolmogorov PDEs \cite{metropolis1949monte,bauer1958monte}. 
Recently, two novel classes of methods have proved successful in dealing with high-dimensional nonlinear PDEs, namely deep learning-based and full history recursive multilevel Picard approximation methods (in the following we will abbreviate \emph{full history recursive multilevel Picard} by MLP).
The explosive success of deep learning in recent years across a wide range of applications \cite{LeCun2015} has inspired a variety of neural network-based approximation methods for high-dimensional PDEs; see \cite{Beck2020} for an overview. One class of such methods is based on reformulating the PDE as a stochastic learning problem through the Feynman--Kac formula \cite{EHanJentzen2017,Han2018,beck2018solving}. In particular, the \emph{deep splitting} scheme introduced in \cite{Beck2019} relies on splitting the differential operator into a linear and a nonlinear part and in that sense belongs to the class of splitting-up methods \cite{cox2013pathwise,gyongy2003splitting,hochbruck2005explicit}. The PDE approximation problem is then decomposed along the time axis into a sequence of separate learning problems. The deep splitting approximation scheme has proved capable of computing reasonable approximations to the solutions of nonlinear PDEs in up to 10000 dimensions.
On the other hand, the MLP approximation method, introduced in \cite{Weinan2019,Hutzenthaler2020,Weinan2021}, utilizes the Feynman--Kac formula to reformulate the PDE problem as a fixed point equation. It further reduces the complexity of the numerical approximation of the time integral through a multilevel Monte Carlo approach.
However, neither the deep splitting nor the MLP method can, until now, account for non-localness and Neumann boundary conditions.

The goal of this article is to overcome these limitations and thus we generalize the deep splitting method and the MLP approximation method to approximately solve non-local nonlinear PDEs with Neumann boundary conditions. We handle the non-local term by a plain vanilla Monte Carlo integration and address Neumann boundary conditions by constructing reflected stochastic processes. While the MLP method can, in one run, only provide an approximate solution at a single point $x \in D$ of the spatial domain $D\subseteq\R^d$ where $d\in\N=\{1,2,\dots\}$, the machine learning-based method can in principle provide an approximate solution on a full subset of the spatial domain $D$ (however, cf., e.g., \cite{heinrich1998monte,heinrich1999monte,grohs2021proof} for results on limitations on the performance of such approximation schemes).
We use both methods to solve five non-local nonlinear PDEs arising in models from biology and physics and cross-validate the results of the simulations.
We manage to solve the non-local nonlinear PDEs with reasonable accuracy in up to 10 dimensions.

For an account of classical numerical methods for solving non-local PDEs, such as finite differences, finite elements, and spectral methods, we refer the reader to the recent survey \cite{delia2020numerical}. Several machine-learning based schemes for solving non-local PDEs can also be found in the literature. 
In particular, the \emph{physics-informed neural network} and \emph{deep Galerkin} approaches \cite{raissi2019physics,Sirignano2018dgm}, based on representing an approximation of the whole solution of the PDE as a neural network and using automatic differentiation to do a least-squares minimization of the residual of the PDE, have been extended to fractional PDEs and other non-local PDEs \cite{pang2019fpinns,lu2021deepxde,guo2022monte,alaradi2019extensions,yuan2022apinn}. While some of these approaches use classical methods susceptible to the curse of dimensionality for the non-local part \cite{pang2019fpinns,lu2021deepxde}, mesh-free methods suitable for high-dimensional problems have also been investigated \cite{guo2022monte,alaradi2019extensions,yuan2022apinn}. 

The literature also contains approaches that are more closely related to the machine learning-based algorithm presented here. Frey \& K\"ock \cite{frey2021deep,frey2022deep} propose an approximation method for non-local semilinear parabolic PDEs with Dirichlet boundary conditions based on and extending the deep splitting method in \cite{Beck2019} and carry out numerical simulations for example PDEs in up to 4 dimensions. Castro \cite{castro2021deep} proposes a numerical scheme for approximately solving non-local nonlinear PDEs based on \cite{hure2020deep} and proves convergence results for this scheme.
Finally, Gonon \& Schwab \cite{gonon2021deep} provide theoretical results showing that neural networks with ReLU activation functions have sufficient expressive power to approximate solutions of certain high-dimensional non-local linear PDEs without the curse of dimensionality.

There is a more extensive literature on machine learning-based methods for approximately solving standard PDEs without non-local terms but with various boundary conditions, going back to early works by Lagaris et al.~\cite{lagaris1998artificial,lagaris2000neural} (see also \cite{mcfall2009artificial}), which employed a grid-based method based on least-squares minimization of the residual and shallow neural networks to solve low-dimensional ODEs and PDEs with Dirichlet, Neumann, and mixed boundary conditions. More recently, approximation methods for PDEs with Neumann (and other) boundary conditions have been proposed using, e.g., physics-informed neural networks \cite{lu2021deepxde,sukumar2022exact,WangPerdikaris2020}, the \emph{deep Ritz} method (based on a variational formulation of certain elliptic PDEs) \cite{e2018deep,liao2021deep,chen2020comparison}, or adversarial networks \cite{zang2020weak}.

The remainder of this article is organized as follows. \Cref{sec:derivation_spec}  discusses a special case of the proposed machine learning-based method, in order to provide a readily comprehensible exposition of the key ideas of the method. \Cref{sec:derivation_gen} discusses the general case, which is flexible enough to cover a larger class of PDEs and to allow more sophisticated optimization methods. \Cref{sec:MLP} presents our extension of the MLP approximation method to non-local nonlinear PDEs, which we use to obtain reference solutions in \cref{sec:examples}. \Cref{sec:examples} provides numerical simulations for five concrete examples of (non-local) nonlinear PDEs. \Cref{sec:sourcecodes} provides the source codes used for the computations in \cref{sec:examples}.

\section{Machine learning-based approximation method in a special case}
\label{sec:derivation_spec}

In this section, we present in \cref{algo:special_case} in \cref{subsec:algo_spec} below a simplified version of our general machine learning-based algorithm for approximating solutions of non-local nonlinear PDEs with Neumann boundary conditions proposed in \cref{sec:derivation_gen} below. This simplified version applies to a smaller class of non-local heat PDEs, specified in \cref{subsec:pde_spec} below. In \cref{subsec:temp-discret} we introduce some notation related to the reflection of straight lines on the boundaries of a suitable subset $D\subseteq\R^d$ where $d\in\N$, which will be used to describe time-discrete reflected stochastic processes that are employed in our approximations throughout the rest of the article. The simplified algorithm described in \cref{subsec:algo_spec} below is limited to using neural networks of a particular architecture that are trained using plain vanilla stochastic gradient descent, whereas the full version proposed in \cref{def:general_algorithm} in \cref{subsec:algo-Full-gen} below is formulated in such a way that it encompasses a wide array of neural network architectures and more sophisticated training methods, in particular Adam optimization, minibatches, and batch normalization. Stripping away some of these more intricate aspects of the full algorithm is intended to exhibit more acutely the central ideas in the proposed approximation method.

The simplified algorithm described in this section as well as the more general version proposed in \cref{def:general_algorithm} in \cref{subsec:algo-Full-gen} below are based on the deep splitting method introduced in Beck et al.~\cite{Beck2019}, which combines operator splitting with a previous deep learning-based approximation method for Kolmogorov PDEs \cite{beck2018solving}; see also Beck et al.~\cite[Sections~2 and~3]{Beck2020} for an exposition of these methods.

\subsection{Partial differential equations (PDEs) under consideration}
\label{subsec:pde_spec}

Let
	$ T \in (0,\infty) $,
	$ d \in \N $,
let
	$\D \subseteq \R^d$
	be a closed set with sufficiently smooth boundary	$\partial_\D$,
let 
	$ \mathbf{n} \colon \partial_\D \to \R^d $ 
	be an outer unit normal vector field associated to $\D$,
let
	$g\in C(D, \R)$,
let
	$\nu_x \colon \Borel(\D) \to [0,1]$,
	$x \in \D$,
	be probability measures,
let
	$f \colon \R \times \R \to \R$
	be measurable,
let
	$u=(u(t,x))_{(t,x)\in [0,T]\times\D}\in C^{1,2}([0,T]\times\D,\R)$
	have at most polynomially growing partial derivatives, 
assume\footnote{Throughout this article we denote
by 
	$\ang{\cdot,\cdot}\colon\pr*{\bigcup_{n\in\N} (\R^n\times\R^n)}\to\R$
	and $\norm{\cdot}\colon\pr*{\bigcup_{n\in\N}\R^n}\to\R$ the 
functions which satisfy for all 
	$n\in\N$, 
	$v=(v_1,\dots,v_n),\,w=(w_1,\dots,w_n)\in\R^n$
that
	$\ang{v,w}=\sum_{i=1}^n v_iw_i$
and $\norm{v}=\sqrt{\ang{v,v}}=\bbr{\sum_{i=1}^n \abs{v_i}^2}^{\nicefrac{1}{2}}$.
}
	for every
		$t\in (0,T]$,
		$x\in \partial_\D$
	that
		$ \ang*{\mathbf{n}(x) ,(\nabla_x u)(t,x)} = 0$,
and assume 
	for every
		$t\in [0,T]$,
		$x\in\D$
	that
		$u(0,x)=g(x)$,
		$\int_\D \abs{f(u(t,x),u(t,\mathbf{x})) } \, \nu_x(\diff\mathbf{x}) < \infty$,
		and
	\begin{equation}
	\begin{split}
	\label{eq:defPDEspecial}
		\bpr{\tfrac{\partial}{\partial t}u}(t,x)
		&=
		(\Delta_x u)(t,x)+\int_{\D} f(u(t,x),u(t,\mathbf{x})) \, \nu_x(\diff\mathbf{x})
		.
	\end{split}
	\end{equation}

Our goal in this section is to approximately calculate under suitable hypotheses the solution $u\colon [0,T]\times \D \to \R$ of the PDE in \eqref{eq:defPDEspecial}.

\subsection{Reflection principle for the simulation of time discrete reflected processes}
\label{subsec:temp-discret}
\begin{algo}[Reflection principle for the simulation of time discrete reflected processes]
	\label{algo:time_discrete_reflected_processes}
	Let
		$d \in \N$, %
	let
		$\D \subseteq \R^{ d }$
		be a closed set with sufficiently smooth boundary	$\partial_\D$,
	let
		$ \mathbf{n} \colon \partial_\D \to \R^d $ 
		be a suitable outer unit normal vector field associated to $\D$,
	let 
		$\mathfrak{c} \colon (\R^d)^2  \to \R^d $ 
	satisfy 
		for every
			$a,b \in \R^d$
		that
		\begin{equation}
			\mathfrak{c}(a,b) 
			=
			a + \bbr{\inf( \{r \in [0,1] \colon a + r(b - a) \notin \D\}\cup \{1\})} (b-a)
			,
		\end{equation}
	let
		$\mathscr{R} \colon (\R^{ d })^2 \to ( \R^{ d })^2 $
	satisfy 
		for every
			$a,b \in \R^d$
		that
		\begin{equation}
			\mathscr{R}(a,b) 
			= 
			\begin{cases}
				(a,b)
				& \colon \mathfrak{c}(a,b) =a \\
				\bpr{ \mathfrak{c}(a,b), b - 2\mathbf{n}(\mathfrak{c}(a,b)) \bang{ b-\mathfrak{c}(a,b),\mathbf{n}(\mathfrak{c}(a,b))} }
				& \colon \mathfrak{c}(a,b) \notin\{a, b\} \\
				(b,b) 
				& \colon \mathfrak{c}(a,b) = b,
			\end{cases}
		\end{equation}
	let
		$P \colon (\R^d)^2 \to \R^d$
	satisfy 
		for every
			$a,b \in \R^d$
		that
			$P(a,b) = b$,
	let
		$\mathcal{R}_{n} \colon (\R^d)^2 \to ( \R^{ d } )^2$, $n \in \N_0 = \{0\} \cup \N$,
	satisfy 
		for every
			$n \in \N_0 $,
			$x,y \in \R^d$
		that
			$\mathcal{R}_0(x,y) = (x,y)$ and
			$\mathcal{R}_{ n + 1 }(x,y) = \mathscr{R}( \mathcal{R}_{n} (x,y ) ) $,
	and let
		$R \colon (\R^d)^2 \to \R^d $
	satisfy 
		for every
			$x,y\in \R^d$
		that
		\begin{equation}
			R(x,y)
			=
			{\textstyle \lim_{n\to \infty}} P(\mathcal{R}_{n}(x,y))
			.
		\end{equation}
\end{algo}

\subsection{Description of the proposed approximation method in a special case}
\label{subsec:algo_spec}

\begin{algo}[Special case of the machine learning-based approximation method]
	\label{algo:special_case}
	Assume 
		\cref{algo:time_discrete_reflected_processes},
	let
		$T,\gamma\in (0,\infty)$,
		$N,M,K \in\N$, 
		$g \in C^2(\R^d,\R)$,
		$\mathfrak d,\mathfrak{h} \in \N \backslash \{1\}$,
		$t_0,t_1,\ldots,t_N\in [0,T]$ 
	satisfy
		$\mathfrak{d} = \mathfrak{h}(N+1)d(d+1)$
	and
	\begin{equation}
		\label{eq:algo_spec_times}
		0 = t_0 < t_1 < \ldots < t_N = T
		,
	\end{equation}
	let 
		$\tau_0, \tau_1, \dots,\tau_N \in [0,T]$ 
	satisfy 
		for every 
			$n \in \{0,1,\dots,N\}$ 
		that 
			$\tau_n= T-t_{N-n}$,
	let
		$f \colon \R \times \R \to \R$
		be measurable,
	let
		$(\Omega,\F,\P,(\mathcal{F}_t)_{t\in [0,T]})$
		be a filtered probability space,
	let
		$\xi^{m}\colon\Omega\to\R^d$, $m\in\N$,
		be i.i.d.\ $\mathcal{F}_0$/$\B(\R^d)$-measurable random variables,
	let
		$W^{m}\colon [0,T]\times \Omega \to\R^d$, $m\in\N$, 
		be i.i.d.\ standard	$(\mathcal{F}_t)_{t\in [0,T]}$-Brownian motions,
	for every
		$m\in\N$
	let
		$\Y^{m}\colon \{0,1,\ldots,N\}\times\Omega\to\R^d$
		be the stochastic process 
		which satisfies 
			for every 
				$n\in\{0,1,\ldots,N-1\}$   
			that
				$\Y^{m}_0 = \xi^{m}$ and
				\begin{equation}\label{Y-algo-spez}
					\Y^{m}_{n+1}
					=
					R\bpr{\Y^{m}_{n}, \Y^{m}_{n} + \sqrt{2} (W^{m}_{\tau_{n+1}}-W^{m}_{\tau_{n}})}
					,
				\end{equation}
	let 
		$ \mathcal{L} \colon \R^d \to \R^d $ 
	satisfy 
		for every 
			$ x = ( x_1, \dots, x_d ) \in \R^d $ 
		that
		\begin{equation}
			\label{eq:activation}
			\mathcal{L}( x )
			=
			\pr*{\frac{\exp(x_1)}{\exp(x_1)+1},\dots,\frac{\exp(x_d)}{\exp(x_d)+1}}
			,
		\end{equation}
	for every 
		$ \theta = ( \theta_1, \dots, \theta_{ \mathfrak{d} } ) \in \R^{ \mathfrak{d} }$,
		$k, l, v \in \N $
		with
			$v + l (k + 1 ) \leq \mathfrak{d}$
	let
		$ A^{ \theta, v }_{ k, l } \colon \R^k \to \R^l $
	satisfy 
		for every
			$ x = ( x_1, \dots, x_k )\in\R^k $ 
		that
		\begin{equation}\label{eq:layerA}
			A^{ \theta, v }_{ k, l }( x ) 
			= 
			\bbbpr{
				\theta_{v+kl+1}+ \br*{\textstyle\sum\limits_{i=1}^ k x_i\, \theta_{v+i}}, 
				\dots, 
				\theta_{v+kl+l} + \br*{\textstyle\sum\limits_{i=1}^ k x_i\, \theta_{v+(l-1)k+i}} 
			}
			,
		\end{equation}	%
	let
		$\bV_n\colon\R^{\mathfrak{d}}\times\R^d\to\R$, $n\in\{0,1,\ldots,N\}$,
	satisfy 
		for every
			$n\in\{1,2,\ldots,N\}$, 
			$\theta \in \R^{\mathfrak{d}}$, 
			$x \in \R^d$
		that
			$\bV_0(\theta,x) = g(x)$ and
			\begin{align}
				\label{eq:neural_network_for_a_generalCase}
				& \bV_{ n }(\theta,x)
				= \\
				& \bpr{A^{ \theta, (\mathfrak{h}n+\mathfrak{h}-1)d(d+1) }_{ d, 1 }
				\circ
				\mathcal{L}
				\circ
				A^{ \theta, (\mathfrak{h}n+\mathfrak{h}-2)d(d+1) }_{ d, d }
				\circ
				\ldots
				\circ
				\mathcal{L}
				\circ
				A^{ \theta, (\mathfrak{h}n+1)d(d+1) }_{ d, d }
				\circ
				\mathcal{L}
				\circ
				A^{ \theta, \mathfrak{h}nd(d+1) }_{ d, d }}(x)
				, \nonumber
			\end{align}
	let
		$\nu_x \colon \mathcal{B}( \D )\to [0,1]$,
		$x \in \D$,
		be probability measures, 
	for every
		$x \in \D$
	let
		$Z^{ n, m }_{ x, k } \colon \Omega \to \D$,
		$k, n, m \in \N$,
		be i.i.d.\ random variables 
		which satisfy 
			for every
				$ A \in \mathcal{B}( \D ) $
			that
				$\P( Z_{ x, 1 }^{ 1, 1 } \in A ) = \nu_x( A )$,
	let
		$\Theta^{n}\colon \N_0\times\Omega\to\R^{\mathfrak{d}}$, $n \in \{0,1,\ldots,N\}$,
		be stochastic processes,
	for every 
		$n \in \{1,2,\ldots,N\}$, 
		$m\in \N$
	let
		$\phi^{n,m}\colon\R^{\mathfrak{d}}\times\Omega\to\R$
	satisfy 
		for every
			$\theta\in\R^{\mathfrak{d}}$, $\omega\in\Omega$
		that
		\begin{equation}
		\label{eq:loss_special_case}
		\begin{split}
			&\phi^{n,m}(\theta,\omega)
			= 
			\bbbbr{ \bV_n\bpr{\theta,\Y^{m}_{N-n}(\omega)} - \bV_{n-1}\bpr{\Theta^{n-1}_M(\omega),\Y^{m}_{N-n+1}(\omega)}\\
			& - \tfrac{(t_{n}-t_{n-1})}{K} \bbbbr{ \smallsuml_{k=1}^{K}  f\bpr{\bV_{n-1}(\Theta^{n-1}_M(\omega),\Y^{m}_{N-n+1}(\omega)),\bV_{n-1}(\Theta^{n-1}_M(\omega),Z_{ \mathcal{Y}^m_{ N - n + 1 }(\omega), k }^{ n, m }(\omega)) } } }^2, %
		\end{split}
		\end{equation}
	for every
		$n\in\{1,2,\ldots,N\}$,
		$m\in\N$
	let
		$\Phi^{n,m}\colon\R^{\mathfrak{d}}\times\Omega\to\R^{\mathfrak{d}}$
	satisfy 
		for every
			$\theta\in\R^{\mathfrak{d}}$,
			$\omega\in\Omega$
		that
			$\Phi^{n,m}(\theta,\omega) = (\nabla_{\theta}\phi^{n,m})(\theta,\omega)$,
	and assume for every
		$n\in\{1,2,\ldots,N\}$,
		$m\in\N$
	that
	\begin{equation}
	\label{eq:plain-vanilla-SGD}
		\Theta^{n}_{m} 
		=
		\Theta^{n}_{m-1} - \gamma\,\Phi^{n,m}(\Theta^{n}_{m-1})
		.
	\end{equation}
\end{algo}
As indicated in \cref{subsec:pde_spec} above, the algorithm described in
\cref{algo:special_case} computes 
an approximation for a solution of the PDE in
\cref{eq:defPDEspecial}, i.e., a
function
$u\in C^{1,2}([0,T]\times D,\R)$
which has at most polynomially growing derivatives,
which satisfies for every
$t\in (0,T]$,
$x\in \partial_\D$
that
$ \ang*{\mathbf{n}(x) ,(\nabla_x u)(t,x)} = 0$
and which satisfies for every
$t\in [0,T]$,
$x\in\D$
that
$u(0,x)=g(x)$,
$\int_\D \abs{f(u(t,x), \allowbreak u(t,\mathbf{x})) } \, \nu_x(\diff\mathbf{x}) < \infty$,
and
\begin{equation}
\begin{split}
\label{eq:defPDEspecial2}
\bpr{\tfrac{\partial}{\partial t}u}(t,x)
&=  (\Delta_x u)(t,x)
+
\int_{\D} f(u(t,x),u(t,\mathbf{x})) \, \nu_x(\diff\mathbf{x}).
\end{split}
\end{equation}
Let us now add some explanatory comments on the objects and
notations employed in \cref{algo:special_case} above.
The algorithm in \cref{algo:special_case} decomposes the time interval $[0,T]$ into $N$ subintervals at the times $t_0,t_1,t_2,\dots,t_{N}\in[0,T]$ (cf.~\cref{eq:algo_spec_times}). For every $n\in\{1,2,\dots,N\}$ we aim to approximate the function $\R^d\ni x\mapsto u(t_n,x)\in \R$ by a suitable (realization function of a) fully-connected feedforward neural network. Each of these neural networks is an alternating composition of $\mathfrak h-1$ affine linear functions from $\R^d$ to $\R^d$ (where we think of $\mathfrak h\in\N\backslash\{1\}$ as the \emph{length} or \emph{depth} of the neural network), $\mathfrak h-1$ instances of a $d$-dimensional version of the standard logistic function and finally an affine linear function from $\R^d$ to $\R$. Every such neural network can be specified by means of $(\mathfrak h-1)(d^2+d)+d+1\leq \mathfrak hd(d+1)$ real parameters and so $N+1$ of these neural networks can be specified by a parameter vector of length $\mathfrak d=\mathfrak h(N+1)d(d+1)\in\N$.
Note that $\mathcal L\colon \R^d\to\R^d$ in \cref{algo:special_case} above denotes the $d$-dimensional version of the standard logistic function (cf.~\cref{eq:activation}) and for every $k,l,v\in\N$, $\theta\in\R^{\mathfrak d}$ with $v+kl+l\leq\mathfrak d$ the function $A^{\theta,v}_{k,l}\colon \R^k\to\R^l$ in \cref{algo:special_case} denotes an affine linear function specified by means of the parameters $v+1,v+2,\dots,v+kl+l$ (cf.~\cref{eq:layerA}). Furthermore, observe that for every $n\in\{1,2,\dots,N\}$, $\theta\in\R^{\mathfrak d}$ the function 
\begin{equation}
\R^d\ni x\mapsto \mathbb V_n(\theta,x)\in\R
\end{equation}
denotes a neural network specified by means of the parameters $\mathfrak hnd(d+1)+1, \mathfrak hnd(d+1)+2,\dots,(\mathfrak hn+\mathfrak h-1)d(d+1)+d+1$. 

The goal of the optimization algorithm in \cref{algo:special_case} above is to find a suitable parameter vector $\theta\in\R^{\mathfrak d}$ such that for every $n\in\{1,2,\dots,N\}$ the neural network $\R^d\ni x\mapsto \mathbb V_n(\theta,x)\in\R$ is a good approximation for the solution $\R^d\ni x\mapsto u(t_n,x)\in \R$ to the PDE in \cref{eq:defPDEspecial2} at time $t_n$.
This is done by performing successively for each $n\in\{1,2,\dots,N\}$ a plain vanilla stochastic gradient descent (SGD) optimization on a suitable loss function (cf.~\cref{eq:plain-vanilla-SGD}). 

Observe that for every $n\in\{1,2,\dots,N\}$ the stochastic process $\Theta^n\colon \N_0\times \Omega\to\R^{\mathfrak d}$ describes the successive estimates computed by the SGD algorithm for the parameter vector that represents (via $\mathbb V_n\colon \R^{\mathfrak d}\times\R^d\to\R$) a suitable approximation to the solution $\R^d\ni x\mapsto u(t_n,x)\in \R$ of the PDE in \cref{eq:defPDEspecial2} at time $t_n$.
Next note that $M\in\N$ in \cref{algo:special_case} above denotes the number of gradient descent steps taken for each $n\in\{1,2,\dots,N\}$ and that $\gamma\in(0,\infty)$ denotes the learning rate employed in the SGD algorithm. Moreover, observe that for every $n\in\{1,2,\dots,N\}$, $m\in\{1,2,\dots,M\}$ the function $\phi^{n,m}\colon \R^{\mathfrak d}\times\Omega\to\R$ denotes the loss function employed in the $m$th gradient descent step during the approximation of the solution of the PDE in \cref{eq:defPDEspecial2} at time $t_n$ (cf.~\cref{eq:loss_special_case}). The loss functions employ a family of i.i.d.\ time-discrete stochastic processes $\mathcal Y^m\colon \{0,1,\dots N\}\times\Omega\to\R^d$, $m\in\N$, which we think of as discretizations of suitable reflected Brownian motions (cf.~\cref{Y-algo-spez}). In addition, for every $n\in\{1,2,\dots,N\}$, $m\in\{1,2,\dots,M\}$, $x\in D$ the loss function $\phi^{n,m}\colon \R^{\mathfrak d}\times\Omega\to\R$ employs a family of i.i.d.~random variables $Z^{n,m}_{x,k}\colon \Omega\to D$, $k\in\N$, which are used for the Monte Carlo approximation of the non-local term in the PDE in \cref{eq:defPDEspecial2} whose solution we are trying to approximate. The number of samples used in these Monte Carlo approximations is denoted by $K\in\N$ in \cref{algo:special_case} above.

Finally, for sufficiently large $N,M,K\in\N$ and sufficiently small $\gamma\in(0,\infty)$ the algorithm in \cref{algo:special_case} above yields for every $n\in\{1,2,\dots,N\}$ a (random) parameter vector $\Theta^n_M\colon \Omega\to\R^{\mathfrak d}$
which represents a function $\R^d\times\Omega\ni (x,\omega)\mapsto \mathbb V_n(\Theta^n_M(\omega),x)\in \R$ that we think of as providing for every $x\in\D$ a suitable approximation
\begin{equation}
	\mathbb V_n(\Theta^n_M,x) \approx u(t_n,x)
	.
\end{equation}

\section{Machine learning-based approximation method in the general case}
\label{sec:derivation_gen}

In this section we describe in \cref{def:general_algorithm} in \cref{subsec:algo-Full-gen} below the full version of our deep learning-based method for approximating solutions of non-local nonlinear PDEs with Neumann boundary conditions (see \cref{subsec:gen_pdes} for a description of the class of PDEs our approximation method applies to), which generalizes the algorithm introduced in \cref{algo:special_case} in \cref{subsec:algo_spec} above and which we apply in \cref{sec:examples} below to several examples of non-local nonlinear PDEs.

\subsection{PDEs under consideration}
\label{subsec:gen_pdes}
Let
$ T \in (0,\infty) $,
$ d \in \N $,
let
$\D \subseteq \R^d$
be a closed set with sufficiently smooth boundary
$\partial_\D$,
let $ \mathbf{n} \colon \partial_\D \to \R^d $ be an outer unit normal vector field associated to $\D$,
let
$
g\colon \D \to \R
$,
$
\mu \colon \D \to \R^d
$,
and
$
\sigma \colon \D \to \R^{ d \times d }
$
be
continuous, let
$\nu_x \colon \Borel(\D) \to [0,1]$,
$x \in \D$,
be probability measures,
let
$
f \colon [0,T] \times \D \times \D \times \R \times \R
\to \R
$
be measurable,
let
$
u=
(u(t,x))_{(t,x)\in [0,T]\times\D}\in C^{1,2}([0,T]\times\D,\R)
$
have at most polynomially growing partial derivatives, 
assume for every
$t\in [0,T]$,
$x\in \partial_\D$
that
$ \ang*{\mathbf{n}(x) ,(\nabla_x u)(t,x)} = 0$,
and assume for every
$t\in [0,T]$,
$x\in\D$
that
$u(0,x)=g(x)$,
$\int_\D \babs{f\bpr{t,x,\mathbf{x}, u(t,x),u(t,\mathbf{x})
} } \allowbreak 
\, \nu_x(\diff\mathbf{x}) < \infty$,
and
\begin{equation}
\begin{split}
\bpr{\tfrac{\partial}{\partial t}u}(t,x)
&=
\int_{\D} f\bpr{t,x,\mathbf{x}, u(t,x),u(t,\mathbf{x})
} \, \nu_x(\diff\mathbf{x}) \\
& \quad + \bang{\mu(x), ( \nabla_x u )( t,x ) }
+ \tfrac{ 1 }{ 2 }
\Trace\bpr{
\sigma(x) [ \sigma(x) ]^*
( \Hess_x u)( t,x )
}.
\label{eq:defPDE}
\end{split}
\end{equation}

Our goal is to approximately calculate under suitable hypotheses the solution $u\colon [0,T]\times \D \to \R$ of the PDE in \eqref{eq:defPDE}.

\subsection{Description of the proposed approximation method in the general case}
\label{subsec:algo-Full-gen}

\begin{algo}[General case of the machine learning-based approximation method]
	\label{def:general_algorithm}
	Assume 
		\cref{algo:time_discrete_reflected_processes},
	let
		$T \in (0,\infty)$,
		$N, \varrho, \mathfrak{d}, \varsigma \in \N$,
		$(M_n)_{n\in\N_0}\subseteq\N$,
		$(K_n)_{n\in \N}\subseteq\N$,
		$(J_m)_{m \in \N} \subseteq \N$,
		$t_0,t_1,\ldots,t_N\in [0,T]$ 
	satisfy
	\begin{equation}
		0 = t_0 < t_1 < \ldots < t_N = T
		,
	\end{equation}
	let 
		$\tau_0, \tau_1, \dots,\tau_N \in [0,T]$ 
	satisfy 
		for every 
			$n \in \{0, 1, \dots, N\}$ 
		that 
			$\tau_n= T-t_{N-n}$,
	let
		$\nu_x \colon \mathcal{B}( \D )\to [0,1]$,
		$x \in \D$,
		be probability measures, 
	for every
		$x \in \D$
	let
		$Z^{ n, m,j }_{ x, k } \colon \Omega \to \D$,
		$k, n, m, j \in \N$,
		be i.i.d.\ random variables 
		which satisfy 
			for every
				$ A \in \mathcal{B}( \D ) $
			that
				$\P( Z_{ x, 1 }^{ 1, 1,1 } \in A ) = \nu_x( A )$,
	let
		$f\colon [0,T] \times \D \times \D \times \R \times \R 
		\to \R$
	be measurable,
	let
		$( \Omega, \F, \P, ( \mathcal{F}_t )_{ t \in [0,T] } )$
		be a filtered probability space,
	for every
		$n \in \{1,2,\ldots,N\}$
	let
		$W^{n,m,j} \colon [0,T] \times \Omega \to \R^d$,
		$m,j \in \N$,
		be i.i.d.\ standard $( \mathcal{F}_t )_{ t \in [0,T] }$-Brownian motions,
	for every
	    $n \in \{1, 2, \ldots,N\}$
	let
		$\xi^{n,m,j}\colon\Omega\to\R^d$,
		$m, j \in \N $,
		be i.i.d.\ $ \mathcal{F}_0$/$\B(\R^d)$-measurable random variables,
	let
		$H\colon [0,T]^2\times\R^d\times\R^d\to\R^d$
		be a function,
	for every
		$j\in\N$,
		$\mathbf{s}\in \R^\varsigma$,
		$n\in\{0, 1, \dots,N\}$
	let
		$ \bV^{j,\mathbf{s}}_n\colon \R^{\mathfrak{d}}\times\R^d\to \R$ be a function,
	for every
		$ n \in \{1,2, \ldots,N\}$,
		$ m,j \in \N $
	let
		$\Y^{n,m,j}\colon \{0,1,\ldots,N\}\times\Omega\to\R^d$
		be a stochastic process which satisfies 
			for every 
				$k\in\{0,1,\ldots,N-1\}$ 
			that 
				$\Y^{n,m,j}_0 = \xi^{ n, m, j }$ and
			\begin{equation}\label{eq:FormalXapprox}
				\Y^{n,m,j}_{k+1}
				=
				H(\tau_{k+1},\tau_{k},\Y^{n,m,j}_k,W^{n,m,j}_{\tau_{k + 1}} - W^{n,m,j}_{\tau_{k}})
				,
			\end{equation}
	let 
		$\Theta^{n}\colon\N_0\times\Omega\to\R^{\mathfrak{d}}$, $n \in \{0, 1, \dots, N\}$, 
		be stochastic processes,
	for every
		$n\in\{1,2,\ldots,N\}$,
		$m\in\N$,
		$\mathbf{s}\in\R^{\varsigma}$
	let
		$\phi^{n,m,\mathbf{s}}\colon\R^{\mathfrak{d}}\times\Omega\to\R$
	satisfy 
		for every
			$\theta\in\R^{\mathfrak{d}}$,
			$\omega\in\Omega$ 
		that
		\begin{equation}
			\label{eq:loss_general_case}
		\begin{split}
			& \phi^{n,m,\mathbf{s}}(\theta,\omega)
			=
			\frac{1}{J_m}\sum_{j=1}^{J_m}
			\bbbbbr{
			\bV^{j,\mathbf{s}}_n\bpr{\theta,\Y^{n,m,j}_{N-n}(\omega)}
			-
			\bV^{j,\mathbf{s}}_{n-1}\bpr{\Theta^{n-1}_{M_{n-1}}(\omega),\Y^{n,m,j}_{N-n+1}(\omega)}\\
			& - \tfrac{(t_n-t_{n-1})}{K_n} \bbbbr{ \textstyle \sum \limits_{k=1}^{K_n}  f\bbpr{t_{n-1},
			\Y^{n,m,j}_{N-n+1}(\omega),
			Z_{ \mathcal{Y}^{n,m,j}_{ N - n + 1 }(\omega), k }^{ n, m,j }(\omega),\\
			& \bV^{j,\mathbf{s}}_{n-1}\bpr{\Theta^{n-1}_{M_{n-1}}(\omega),\Y^{n,m,j}_{N-n+1}(\omega)},
			\bV^{j,\mathbf{s}}_{n-1}\bpr{\Theta^{n-1}_{M_{n-1}}(\omega),	Z_{ \mathcal{Y}^{n,m,j}_{ N - n + 1 }(\omega), k }^{ n, m,j }(\omega)}
			}}
			}^2,
		\end{split}
		\end{equation}
	for every
		$n\in\{1,2,\ldots,N\}$,
		$m\in\N$,
		$\mathbf{s}\in\R^{\varsigma}$
	let
		$\Phi^{n,m,\mathbf{s}}\colon\R^{\mathfrak{d}}\times\Omega\to\R^{\mathfrak{d}}$ 
	satisfy 
		for every
			$\omega\in\Omega$,
			$\theta\in\{\vartheta\in\R^{\mathfrak{d}}\colon \text{$(\R^{\mathfrak d}\ni \eta\mapsto \phi^{n,m,\mathbf{s}}(\eta,\omega)\in \R)$ is differentiable at $\vartheta$}\}$
		that
		\begin{align}
			\Phi^{n,m,\mathbf{s}}(\theta,\omega) 
			= 
			(\nabla_{\theta}\phi^{n,m,\mathbf{s}})(\theta,\omega)
			,
		\end{align}
	let 
		$\S^n\colon\R^{\varsigma}\times\R^{\mathfrak{d}}\times(\R^d)^{\{0,1,\ldots,N\}\times\N}\to\R^{\varsigma}$, $n\in\{1,2,\ldots,N\}$,
		be functions,
	for every
		$n\in\{1,2,\ldots,N\}$,
		$m\in\N$
	let 
		$\psi^n_m\colon\R^{\varrho}\to\R^{\mathfrak{d}}$	and 
		$\Psi^n_m\colon\R^{\varrho}\times\R^{\mathfrak{d}}\to\R^{\varrho}$
		be functions,
	and for every
		$n\in\{1,2,\ldots,N\}$
	let
		$\bS^{n}\colon\N_0\times\Omega\to\R^{\varsigma}$ and
		$\Xi^{n}\colon\N_0\times\Omega\to\R^{\varrho}$
		be stochastic processes
		which satisfy 
			for every 
				$m\in\N$ 
			that
			\begin{equation}\label{eq:general_batch_normalization}
				\bS^{n}_{m} 
				= 
				\S^{n}\bpr{\bS^{n}_{m-1}, \Theta^{n}_{m-1},	(\Y_k^{n,m,i})_{(k,i)\in\{0,1,\ldots,N\}\times\N}}
				,
			\end{equation}
			\begin{equation}
				\label{eq:general_gradient_step}
				\Xi^n_{m} 
				=
				\Psi^n_{m}(\Xi^n_{m-1},\Phi^{n,m,\bS^n_{m}}(\Theta^n_{m-1}))
				,
				\qquad\text{and}\qquad
				\Theta^{n}_{m} 
				=
				\Theta^{n}_{m-1} - \psi^n_{m}(\Xi^n_{m})
				.
			\end{equation}
\end{algo}
In the setting of \cref{def:general_algorithm} above
we think under suitable hypotheses 
for sufficiently large
$N \in \N$,
sufficiently large
$(M_n)_{n\in \N_0} \subseteq \N$,
sufficiently large 
$(K_n)_{n\in \N} \subseteq \N$,
every
$n \in \{0, 1, \dots, N\}$, 
and every 
$x \in \D$
of 
$ \mathbb{V}^{1,\mathbb{S}_{M_n}^n}_n(\Theta^n_{M_n},x) \colon \Omega \to \R$ as a suitable approximation
\begin{equation}
\mathbb{V}^{1,\mathbb{S}_{M_n}^n}_n(\Theta^n_{M_n},x) \approx u(t_n,x)
\end{equation}
of $u(t_n,x)$ where
$
u=(u(t,x))_{(t,x)\in [0,T]\times\R^d}\in C^{1,2}([0,T]\times\R^d,\R)
$
is a function with at most  polynomially growing derivatives
which satisfies for every
$t\in (0,T]$,
$x\in \partial_\D$
that
$ \ang*{\mathbf{n}(x) ,(\nabla_x u)(t,x)} = 0$
and which satisfies for every
$t\in [0,T]$,
$x\in\D$
that
$u(0,x)=g(x)$,
$\int_\D \babs{f\bpr{t,x,\mathbf{x}, \allowbreak u(t,x), u(t,\mathbf{x})
} } \, \nu_x(\diff\mathbf{x}) < \infty$,
and
\begin{equation}
\label{eq:defPDE_general}
\begin{split}
\bpr{\tfrac{\partial}{\partial t}u}(t,x)
&=
\int_{\D} f\bpr{t,x,\mathbf{x}, u(t,x),u(t,\mathbf{x})
} \, \nu_x(\diff\mathbf{x}) \\
& \quad + \bang{\mu(x), ( \nabla_x u )( t,x ) }
+ \tfrac{ 1 }{ 2 }
\Trace\bpr{
\sigma(x) [ \sigma(x) ]^*
( \Hess_x u)( t,x )
}
\end{split}
\end{equation}
(cf.\ \eqref{eq:defPDE}).
Compared to the simplified algorithm in \cref{algo:special_case} above, the major new elements introduced in \cref{def:general_algorithm} are the following:
\begin{enumerate}[(a)]
	\item The numbers of gradient descent steps taken to compute approximations for the solution of the PDE at the times $t_n$, $n\in\{1,2,\dots,N\}$, are allowed to vary with $n$, and so are specified by a sequence $(M_n)_{n\in\N_0}\subseteq\N$ in \cref{def:general_algorithm} above.
	\item The numbers of samples used for the Monte Carlo approximation of the non-local term in the approximation for the solution of the PDE at the times $t_n$, $n\in\{1,2,\dots,\allowbreak N\}$, are allowed to vary with $n$, and so are specified by a sequence $(K_n)_{n\in\N_0}\subseteq\N$ in \cref{def:general_algorithm} above.
	\item The approximating functions $\mathbb V^{j,\mathbf s}_n$, $(j,\mathbf s,n)\in \N\times\R^\varsigma\times\{0,1,\dots,N\}$, in \cref{def:general_algorithm} above are not specified concretely in order to allow for a variety of neural network architectures. For the concrete choice of these functions employed in our numerical simulations, we refer the reader to \cref{sec:examples}.
	\item For every $m\in\{1,2,\dots,M\}$ the loss function used in the $m$th gradient descent step may be computed using a minibatch of samples instead of just one sample (cf.~\cref{eq:loss_general_case}). The sizes of these minibatches are specified by a sequence $(J_m)_{m\in\N}\subseteq\N$.
	\item Compared to \cref{algo:special_case} above, the more general form of the PDEs considered in this section (cf.~\cref{eq:defPDE_general}) requires more flexibility in the definition of the time-discrete stochastic processes $\mathcal Y^{n,m,j}\colon \{0,1,\dots,N\}\times\Omega\to\R^d$, $(n,m,j)\in\{1,2,\dots,N\}\times\N\times\N$, which are specified in \cref{def:general_algorithm} above in terms of the Brownian motions $W^{n,m,j}\colon[0,T]\times\Omega\to\R^d$, $(n,m,j)\in\{1,2,\dots,N\}\times\N\times\N$, via a function $H\colon [0,T]^2\times\R^d\times\R^d\to\R^d$ (cf.~\cref{eq:FormalXapprox}). We refer the reader to \cref{eq:Hfkpp} in \cref{subsec:fisherKPP_neumann_r} below, \cref{eq:Hcomp} in \cref{subsec:nonlocalcompPDE} below, \cref{eq:Hsinegordon} in \cref{subsec:sinegordon_nonlocal} below, \cref{eq:Hrepmut} in \cref{subsec:aniso_mutator_selector} below, and \cref{eq:Hallencahn} in \cref{subsec:allen_cahn} below for concrete choices of $H$ in the approximation of various example PDEs.
	\item For every $n\in\{1,2,\dots,N\}$, $m\in\N$ the optimization step in \cref{eq:general_gradient_step} in \cref{def:general_algorithm} above is specified generically in terms of the functions $\psi^n_m\colon \R^{\varrho}\to\R^{\mathfrak d}$ and $\Psi^n_m\colon\R^{\varrho}\times\R^{\mathfrak d}\to\R^{\varrho}$ and the random variable $\Xi^n_m\colon \Omega\to\R^{\varrho}$. This generic formulation covers a variety of SGD based optimization algorithms such as Adagrad \cite{duchi2011adaptive}, RMSprop, or Adam \cite{Kingma2014}. For example, in order to implement the Adam optimization algorithm, for every $n\in\{1,2,\dots,N\}$, $m\in\N$ the random variable $\Xi^n_m$ can be used to hold suitable first and second moment estimates (see~\cref{eq:examples_setting_moment_estimation} and \cref{eq:examples_setting_adam_grad_update} in \cref{sec:examples} below for the concrete specification of these functions implementing the Adam optimization algorithm).
	\item The processes $\mathbb{S}^n\colon \N_0\times\Omega \to \R^{\varsigma}$, $n\in\{1,2,\ldots,N\}$, and functions $\mathcal S^n\colon \R^{\varsigma}\times\R^{\mathfrak d}\times(\R^d)^{\{0,1,\dots,N\}\times\N}\to\R^\varsigma$, $n\in\{1,2,\dots,N\}$, in \cref{def:general_algorithm} above can be used to implement batch normalization; see \cite{ioffe2015batch} for details. Loosely speaking, for every $n\in\{1,2,\dots,N\}$, $m\in\N$ the random variable $\mathbb S^n_m\colon \Omega\to\R^\varsigma$ then holds mean and variance estimates of the outputs of each layer of the approximating neural networks related to the minibatches that are used as inputs to the neural networks in computing the loss function at the corresponding gradient descent step.
\end{enumerate}

\section{Multilevel Picard approximation method for non-local PDEs}
\label{sec:MLP}

In this section we introduce in \cref{frame:mlpsetting} in \cref{subsec:mlpmethod} below our extension of the full history recursive multilevel Picard approximation method for approximating solutions of non-local nonlinear PDEs with Neumann boundary conditions. The MLP method was first introduced in E et al.~\cite{Weinan2021} and Hutzenthaler et al.~\cite{Hutzenthaler2020} and later extended in a number of directions; see E et al.~\cite{E2020} and Beck et al.~\cite{Beck2020} for recent surveys. We also refer the reader to Becker et al.~\cite{becker2020numerical} and E et al.~\cite{E2019multilevel} for numerical simulations illustrating the performance of MLP methods across a range of example PDE problems.

In \cref{subsec:mlp_examples} below, we will specify five concrete examples of (non-local) nonlinear PDEs and describe how \cref{frame:mlpsetting} can be specialized to compute approximate solutions to these example PDEs. These computations will be used in \cref{sec:examples} to obtain reference values to compare the deep learning-based approximation method proposed in \cref{sec:derivation_gen} above against.

\subsection{Description of the proposed approximation method}
\label{subsec:mlpmethod}
\begingroup
\newcommand{\Index}{\mathfrak{I}}
\newcommand{\dindex}{\mathfrak{i}}
\begin{algo}[Multilevel Picard approximation method]\label{frame:mlpsetting}
	Assume 
		\cref{algo:time_discrete_reflected_processes},
	let
		$c,T\in (0,\infty)$, 
		$\Index = \bcup_{n\in \N} \Z^n$,
		$f \in C([0,T]\times D \times D \times \R \times \R,\R)$, 
		$g\in C(D,\R)$, 
		$u \in C([0,T]\times \D, \R)$, 
	assume
		$u\vert_{[0,T)\times \D}\in C^{1,2}([0,T)\times \D,\R)$, 
	let
		$\nu_x \colon \mathcal{B}( \D )\to [0,1]$,
		$x \in \D$,
		be probability measures, 
	for every
		$x \in \D$
	let
		$ \Zz^{\dindex }_{ x } \colon \Omega \to \D $,
		$\dindex \in \Index$,
		be i.i.d.\ random variables, 
	assume for every
		$ A \in \mathcal{B}( \D ) $,
		$\dindex \in \Index$
	that
		$\P( \Zz_{ x }^{ \dindex } \in A ) = \nu_x( A )$,
	let 
		$ \phi_r \colon \R \rightarrow \R$,
		$r\in [0,\infty]$, 
	satisfy 
		for every 
			$r\in [0,\infty]$, 
			$y \in \R$ 
		that
		\begin{equation}
			\phi_r(y) = \min\{r,\max\{-r,y\}\},
		\end{equation}
	let 
		$(\Omega,\mathcal{F}, \P)$ 
		be a probability space,
	let 
		$\Rr^\dindex\colon \Omega \rightarrow (0,1),$ $\dindex \in \Index$, 
		be independent $\mathcal{U}_{(0,1)}$-distributed random variables, 
	let 
		$V^\dindex\colon [0,T]\times \Omega \rightarrow [0,T]$, $\dindex \in \Index$, 
	satisfy 
		for every 
			$t\in [0,T]$, 
			$\dindex \in \Index$ 
		that 
		\begin{equation}
			V^\dindex_{t} 
			= 
			t+ (T-t)\Rr^\dindex
			,
		\end{equation} 
	let 
		$W^\dindex\colon [0,T]\times \Omega \rightarrow \R^d$, $\dindex \in \Index$, 
		be independent standard Brownian motions, 
	assume that 
		$(\Rr^\dindex)_{\dindex \in \Index}$ and $(W^\dindex)_{\dindex \in \Index}$ are independent, 
	let 
		$\mu \colon \R^{d} \to \R^{d}$ and 
		$\sigma \colon \R^{d}\to \R^{d\times d}$ 
		be globally Lipschitz continuous, 
	for every 
		$x\in \R^{d}$, 
		$\dindex \in \Index$, 
		$t\in [0,T]$ 
	let 
		$X^{x,\dindex}_{t} = (X^{x,\dindex}_{t,s})_{s\in [t,T]} \colon [t,T] \times  \Omega \rightarrow \R^d$
		be a stochastic process with continuous sample paths,
	let
		$(K_{n,l,m})_{n,l,m\in \N_0} \subseteq \N$,
	for every
		$\dindex \in \Index$, 
		$n,M \in \N_0$, 
		$r\in [0,\infty]$
	let 
		$U^\dindex_{n,M,r}\colon [0,T]\times \R^d\times \Omega \rightarrow \R^k$
	satisfy for every 
		$t\in [0,T]$, 
		$x\in \R^d$ 
	that 
	\begin{equation}
	\label{setting:MLP}
	\begin{split}
		&
		U^\dindex_{n,M,r}(t,x) 
		= 
		\Biggl[\sum_{l=0}^{n-1} \frac{(T-t)}{M^{n-l}}  
		\sum_{m=1}^{M^{n-l}} \frac{1}{K_{n,l,m}}
		\sum_{k=1}^{K_{n,l,m}}
		\bbbbr{ f \bbpr{
				V_t^{(\dindex,l,m)},
				X^{x,(\dindex,l,m)}_{t,V_t^{(\dindex,l,m)}},
				\Zz^{(\dindex,l,m,k) }_{ X^{x,(\dindex,l,m)}_{t,V_t^{(\dindex,l,m)}} },\\
			&\quad 
				\phi_{r}\bbpr{U^{(\dindex,l,m)}_{l,M,r}\bpr{V_t^{(\dindex,l,m)},X^{x,(\dindex,l,m)}_{t,V_t^{(\dindex,l,m)}}}},
				\phi_{r}\bbpr{U^{(\dindex,l,m)}_{l,M,r}\bpr{V_t^{(\dindex,l,m)},\Zz^{(\dindex,l,m,k) }_{ X^{x,(\dindex,l,m)}_{t,V_t^{(\dindex,l,m)}}}}}
			} \\
			&\quad- \mathbbm{1}_\N(l) \, f \bbpr{
				V_t^{(\dindex,l,m)},
				X^{x,(\dindex,l,m)}_{t,V_t^{(\dindex,l,m)}}, 
				\Zz^{(\dindex,l,m,k)}_{ X^{x,(\dindex,l,m)}_{t,V_t^{(\dindex,l,m)}} },
				\phi_{r}\bbpr{U^{(\dindex,l,-m)}_{\max\{l-1,0\},M,r}\bpr{V_t^{(\dindex,l,m)},X^{x,(\dindex,l,m)}_{t,V_t^{(\dindex,l,m)}}}},\\
			&\quad 
				\phi_{r}\bbpr{U^{(\dindex,l,-m)}_{\max\{l-1,0\},M,r}\bpr{V_t^{(\dindex,l,m)},\Zz^{(\dindex,l,m,k) }_{ X^{x,(\dindex,l,m)}_{t,V_t^{(\dindex,l,m)}} }}}
			}}\Biggr] 
			+  
			\frac{\1_{\N}(n)}{M^n} \bbbbbr{\sum_{m=1}^{M^n} g\bpr{X^{x,(\dindex,0,-m)}_{t,T}}  },
	\end{split}
	\end{equation}
	assume for every 
		$t \in [0,T)$,
		$x \in \partial_D$
	that
		$ \ang*{\mathbf{n}(x) ,(\nabla_x u)(t,x)} = 0$,
	and assume for every 
		$t\in [0,T)$, 
		$x \in \D$
	that 
		$\norm{u(t,x)}\leq c(1+\norm{x}^{c})$, 
		$u(T,x) = g(x)$, 
		and
		\begin{multline}\label{setting:PDE}
			\bpr{\tfrac{\partial}{\partial t}u}(t,x) 
			+\tfrac 12 \Trace \bpr{\sigma(x)[\sigma(x)]^{*}(\Hess_{x} u )(t,x)}
			+   \ang{\mu (x), (\nabla_x u) (t,x)}
			\\+ \int_D f (t,x,\mathbf{x},u(t,x),u(t,\mathbf{x})) \, \nu_x(\diff\mathbf{x})
			=
			0
			.
		\end{multline}
	\end{algo}

\subsection{Examples for the approximation method}
\label{subsec:mlp_examples}

\begin{example}[Fisher--KPP PDEs with Neumann boundary conditions]
	\label{exampleMLP:fisherkpp_neumann}
	In this example we specialize \cref{frame:mlpsetting} to the case of certain Fisher--KPP PDEs with Neumann boundary conditions (cf., e.g., Bian et al.~\cite{Bian2017} and Wang et al.~\cite{Wang2021}).
	
	Assume 
		\cref{frame:mlpsetting},
	let
		$\epsilon\in(0,\infty)$
	satisfy
		$\epsilon = \tfrac{1}{10}$,
	assume that
		$d\in\{1,2,5,10\}$,
		$D = [-\nicefrac{1}{2},\nicefrac{1}{2}]^d$, and
		$T\in\{\nicefrac{1}{5},\nicefrac{1}{2},1\}$,
	assume 
		for every 
			$n,l,m\in \N$ 
		that
			$K_{n,l,m} = 1$,
	assume for every 
		$t \in [0,T]$,
		$x,\mathbf{x} \in \D$,
		$y,\mathbf{y} \in \R$,
		$v \in \R^d$ 
	that
		$g(x)= \exp (- \tfrac{1}{4}\norm{x}^2)$,
		$\mu(x)=(0,\dots,0)$, 
		$\sigma(x) v = \epsilon v$, and
		$f(t,x,\mathbf{x},y,\mathbf{y})= y(1-y)$,
	and	assume that for every 
		$x\in \R^{d}$, 
		$\dindex \in \Index$, 
		$t\in [0,T]$, 
		$s\in [t,T]$ 
	it holds $\P$-a.s.\ that
	\begin{equation}
		X^{x,\dindex}_{t,s} 
		= 
		R\pr*{x,x + \int_{t}^{s} \mu\bpr{X^{x,\dindex}_{t,r}} \, \diff r + \int^{s}_{t} \sigma \bpr{X^{x,\dindex}_{t,r}} \, \diff W^{\dindex}_{r} }
		=
		R(x,x+\epsilon(W^\dindex_s-W^\dindex_t))
		.
	\end{equation}
	The solution 
		$u\colon[0,T]\times \D \to \R$ 
	of the PDE in \eqref{setting:PDE} then satisfies that 
		for every
			$t\in [0,T)$, $x\in\partial_\D$
		it holds that
			$\ang*{\mathbf{n}(x) ,(\nabla_x u)(t,x)} = 0$
		and that for every
			$t\in [0,T)$, $x\in\D$
		it holds that
			$u(T,x) = \exp (- \tfrac{1}{4}\norm{x}^2)$ and
			\begin{equation}
				\label{eqMLP:fisherKPP_neumann}
				\bpr{\tfrac{\partial }{\partial t}u} (t,x) 
				+
				\tfrac{\epsilon^2}{2}(\Delta_x u) (t,x) 
				+
				u(t,x)\bpr{1 - u(t,x) }
				=
				0
				.
			\end{equation}
\end{example}

\begin{example}[Non-local competition PDEs]

	\label{exampleMLP:nonlocal_comp}
	In this example we specialize \cref{frame:mlpsetting} to the case of certain non-local competition PDEs (cf., e.g., Doebeli \& Ispolatov \cite{Doebeli2010}, Berestycki et al.~\cite{Berestycki2009b}, Perthame \& Génieys \cite{Perthame2007}, and Génieys et al.~\cite{Genieys2006a}).

	Assume 
		\cref{frame:mlpsetting},
	let
		$\mathfrak s,\epsilon\in(0,\infty)$
	satisfy 
		$\mathfrak{s} = \epsilon = \tfrac{1}{10}$,
	assume that
		$d\in\{1,2,5,10\}$,
		$D = \R^d$, and
		$T\in\{\nicefrac{1}{5},\nicefrac{1}{2},1\}$,
	assume for every 
		$n,l,m\in \N$ 
	that
		$K_{n,l,m} = 10$,
	assume for every
		$x \in \R^d$,
		$A \in \mathcal{B}(\R^d)$
	that
		$\nu_x(A) = \pi^{-\nicefrac{d}{2}}\mathfrak{s}^{-d}\int_A \exp\pr*{-\mathfrak{s}^{-2}\norm{x - \mathbf{x}}^2}\,\diff\mathbf{x}$,
	assume for every 
		$t \in [0,T]$,
		$v,x,\mathbf{x} \in \R^d$,
		$y,\mathbf{y} \in \R$
	that
		$g(x)= \exp (- \tfrac{1}{4}\norm{x}^2)$,
		$\mu(x)=(0,\dots,0)$,
		$\sigma(x) v = \epsilon v$, and
		$f(t,x,\mathbf{x},y,\mathbf{y})= y(1 -\mathbf{y}\pi^{\nicefrac{d}{2}}\mathfrak{s}^d)$,
	and	assume that for every 
		$x\in \R^{d}$, 
		$\dindex \in \Index$, 
		$t\in [0,T]$, 
		$s\in [t,T]$ 
	it holds $\P$-a.s.\ that
	\begin{equation}
		X^{x,\dindex}_{t,s} 
		= 
		x + \int_{t}^{s} \mu\bpr{X^{x,\dindex}_{t,r}} \, \diff r + \int^{s}_{t} \sigma \bpr{X^{x,\dindex}_{t,r}} \, \diff W^{\dindex}_{r}
		=
		x+\epsilon(W^\dindex_s-W^\dindex_t)
		.
	\end{equation}
	The solution 
		$u\colon[0,T]\times \R^d \to \R$ 
		of the PDE in \eqref{setting:PDE} then satisfies that
	for every
		$t\in [0,T)$, 
		$x\in\R^d$
	it holds that
		$u(T,x) = \exp (-\tfrac{1}{4}\norm{x}^2)$ and
		\begin{equation}
			\label{eqMLP:nonlocalcompet}
			\bpr{\tfrac{\partial }{\partial t}u} (t,x) 
			+
			\tfrac{\epsilon^2}{2}(\Delta_x u) (t,x) 
			+ 
			u(t,x)\pr*{1 - \int_{\R^d} u(t,\mathbf{x})\,\exp\bpr{-\tfrac{\norm{x-\mathbf{x}}^2}{\mathfrak{s}^2}} \, \diff\mathbf{x} }
			=
			0
			.
	\end{equation}
\end{example}

\begin{example}[Non-local sine-Gordon PDEs]

	\label{exampleMLP:sinegordon_nonlocal}
	In this example we specialize \cref{frame:mlpsetting} to the case of certain non-local sine-Gordon type PDEs (cf., e.g., Hairer \& Shen \cite{Hairer2016}, Barone et al.~\cite{Barone1971}, and Coleman \cite{Coleman1994}).

	Assume 
		\cref{frame:mlpsetting},
	let
		$\mathfrak s,\epsilon\in(0,\infty)$
	satisfy 
		$\mathfrak{s} = \epsilon = \tfrac{1}{10}$,
	assume that
		$d\in\{1,2,5,10\}$,
		$D = \R^d$, 
		and	$T\in\{\nicefrac{1}{5},\nicefrac{1}{2},1\}$,
	assume for every 
		$n,l,m\in \N$ 
	that
		$K_{n,l,m} = 10$,
	assume for every
		$x \in \R^d$,
		$A \in \mathcal{B}(\R^d)$
	that
		$\nu_x(A) = \pi^{-\nicefrac{d}{2}}\mathfrak{s}^{-d}\int_A \exp\pr*{-\mathfrak{s}^{-2}\norm{x - \mathbf{x}}^2}\,\diff\mathbf{x}$,
	assume for every 
		$t \in [0,T]$,
		$v,x,\mathbf{x} \in \R^d$,
		$y,\mathbf{y} \in \R$
	that
		$g(x)= \exp (- \tfrac{1}{4}\norm{x}^2)$,
		$\mu(x)=(0,\dots,0)$,
		$\sigma(x) v = \epsilon v$, and
		$f(t,x,\mathbf{x},y,\mathbf{y})= \sin(y) -\mathbf{y}\pi^{\nicefrac{d}{2}}\mathfrak{s}^d$,
	and	assume that for every 
		$x\in \R^{d}$, 
		$\dindex \in \Index$, 
		$t\in [0,T]$, 
		$s\in [t,T]$ 
	it holds $\P$-a.s.\ that
	\begin{equation}
		X^{x,\dindex}_{t,s} 
		= 
		x + \int_{t}^{s} \mu\bpr{X^{x,\dindex}_{t,r}} \, \diff r + \int^{s}_{t} \sigma \bpr{X^{x,\dindex}_{t,r}} \, \diff W^{\dindex}_{r}
		=
		x+\epsilon(W^\dindex_s-W^\dindex_t)
		.
	\end{equation}
	The solution 
		$u\colon[0,T]\times \R^d \to \R$ 
		of the PDE in \eqref{setting:PDE} then satisfies that for every
			$t\in [0,T)$, 
			$x\in\R^d$
		it holds that
			$u(T,x) = \exp (-\tfrac{1}{4}\norm{x}^2)$ and
		\begin{equation}
			\label{eqMLP:sinegordon_nonlocal}
			\bpr{\tfrac{\partial}{\partial t}u}(t,x)
			+
			\tfrac{\epsilon^2}{2}(\Delta_x u)(t,x) 
			+ 
			\sin ( u(t,x) ) 
			- 
			\int_{\R^d} u(t,\mathbf{x})\, \exp\bpr{-\tfrac{\norm{x-\mathbf{x}}^2}{\mathfrak{s}^2}}\,\diff \mathbf{x} 
			=
			0
			.
		\end{equation}
\end{example}

\begin{example}[Replicator-mutator PDEs]
	\label{exampleMLP:hamel}
	In this example we specialize \cref{frame:mlpsetting} to the case of certain $d$-dimensional replicator-mutator PDEs (cf., e.g., Hamel et al.~\cite{Hamel2020}). 

	Assume 
		\cref{frame:mlpsetting}, 
	let
		$\mathfrak{m}_1, \mathfrak{m}_2, \dots,\mathfrak{m}_d, \mathfrak{s}_1, \mathfrak{s}_2, \dots, \mathfrak{s}_d,\mathfrak{u}_1, \mathfrak{u}_2,\dots,\mathfrak{u}_d \in \R$
	satisfy 
		for every
			$k \in \{1,2,\dots,d\}$
		that
			$\mathfrak{m}_k = \tfrac{1}{10}$,
			$\mathfrak{s}_k = \tfrac{1}{20}$,	and
			$\mathfrak{u}_k = 0$,
	assume that
		$d\in\{1,2,5,10\}$,
		$\D = \R^{d}$, and
		$T\in\{\nicefrac{1}{5},\nicefrac{1}{2},1\}$,
	assume for every 
		$n,l,m\in \N$
	that 
		$K_{n,l,m} = 10$,
	let
		$a \colon \R^d \to \R$
	satisfy 
		for every
			$x \in \R^d$
		that
			$a(x) = -\frac{1}{2}\norm{x}^2$,
	assume for every
		$x \in \R^d$,
		$A \in \mathcal{B}(\R^d)$
	that
		$\nu_x(A) = \int_{A\cap[-\nicefrac12,\nicefrac12]^d} \diff {\mathbf x}$,
	assume for every
		$t \in [0,T]$,
		$v= (v_1,\dots,v_d),\,x = (x_1,\dots,x_d)\in \R^d$,
		$\mathbf{x} \in \R^d$,
		$y,\mathbf{y} \in \R$
	that
		$g(x)= (2\pi)^{-\nicefrac{d}{2}} \bbr{\prod_{ i = 1 }^d \abs{\mathfrak{s}_i}^{-\nicefrac{1}{2}}} \exp \bpr{-\sum_{i = 1}^d \, \frac{(x_i - \mathfrak{u}_i )^2}{2\mathfrak{s}_i}}$,
		$\mu(x)=(0,\dots,0)$,
		$\sigma(x)v=(\mathfrak{m}_1 v_1, \dots, \mathfrak{m}_d v_d)$, and
	\begin{equation}
		f(t,x,\mathbf{x},y,\mathbf{y}) 
		=
		y \pr*{a(x) -  \mathbf{y}  a({\mathbf x}) }
		,
	\end{equation}
	and assume that 
		for every 
			$x\in \R^{d}$, 
			$\dindex \in \Index$, 
			$t\in [0,T]$, 
			$s\in [t,T]$ 
		it holds $\P$-a.s.\ that
		\begin{equation}
			X^{x,\dindex}_{t,s} 
			= 
			x + \int_{t}^{s} \mu\bpr{X^{x,\dindex}_{t,r}} \, \diff r + \int^{s}_{t} \sigma \bpr{X^{x,\dindex}_{t,r}} \, \diff W^{\dindex}_{r} 
			=
			x+\sigma(0)(W^\dindex_s-W^\dindex_t)
			.
		\end{equation}
	The solution 
		$u\colon[0,T]\times \R^d \to \R$ 
		of the PDE in \eqref{setting:PDE} then satisfies that 
			for every
				$t\in [0,T)$,
				$x = (x_1,\dots,x_d) \in \R^d$
			it holds that
				$u(T,x)= (2\pi)^{-\nicefrac{d}{2}}\bbr{ \prod_{ i = 1 }^d \abs{\mathfrak{s}_i}^{-\nicefrac{1}{2}}} \exp \bpr{-\sum_{i = 1}^d \, \frac{(x_i - \mathfrak{u}_i )^2}{2\mathfrak{s}_i}}$ and
			\begin{equation}
				\label{eqMLP:aniso_mutator_selector}
				\bpr{\tfrac{\partial }{\partial t}u} (t,x) 
				+
				u(t,x)\pr*{a(x) - \int_{[-\nicefrac12,\nicefrac12]^d} u(t,\mathbf{x})\, a(\mathbf{x}) \, \diff\mathbf{x} } 
				+ \smallsuml_{i=1}^{d} \tfrac{1}{2} \abs{\mathfrak{m}_i}^2 \bpr{\tfrac{\partial^2 }{\partial x_i^2} u} (t,x) 
				=
				0
				.
		\end{equation}
\end{example}

\begin{example}[Allen--Cahn PDEs with conservation of mass]
	\label{exampleMLP:allen_cahn}
	In this example we specialize \cref{frame:mlpsetting} to the case of certain Allen--Cahn PDEs with cubic nonlinearity, conservation of mass, and no-flux boundary conditions (cf., e.g., Rubinstein \& Sternberg \cite{RUBINSTEIN1992}).
	
	Assume 
		\cref{frame:mlpsetting},
	let
		$\epsilon\in(0,\infty)$
	satisfy 
		$\epsilon = \tfrac{1}{10}$,
	assume that
		$d\in\{1,2,5,10\}$,
		$D = [-\nicefrac12, \nicefrac12]^d$, and
		$T\in\{\nicefrac{1}{5},\nicefrac{1}{2},1\}$,
	assume for every 
		$n,l,m\in \N$ 
	that
		$K_{n,l,m} = 10$,
	assume for every
		$x \in D$,
		$A \in \mathcal{B}(D)$
	that
		$\nu_x(A) = \int_A \diff\mathbf{x}$,
	assume for every 
		$t \in [0,T]$,
		$x,\mathbf{x} \in \D$,
		$y,\mathbf{y} \in \R$,
		$v \in \R^d$ 
	that
		$g(x)= \exp (- \tfrac{1}{4}\norm{x}^2)$,
		$\mu(x)=(0,\dots,0)$,
		$\sigma(x) v = \epsilon v$, and
		$f(t,x,{\mathbf x},y,{\mathbf y})= y - y^3 - (\mathbf{y} - {\mathbf y}^3)$,
	and	assume that 
		for every 
			$x\in \R^{d}$, 
			$\dindex \in \Index$, 
			$t\in [0,T]$, 
			$s\in [t,T]$ 
		it holds $\P$-a.s.\ that
		\begin{equation}
			X^{x,\dindex}_{t,s} 
			= 
			R\pr*{x,x + \int_{t}^{s} \mu\bpr{X^{x,\dindex}_{t,r}} \, \diff r + \int^{s}_{t} \sigma \bpr{X^{x,\dindex}_{t,r}} \, \diff W^{\dindex}_{r} }
			=
			R(x,x+\epsilon(W^\dindex_s-W^\dindex_t))
			.
		\end{equation}
	The solution 
		$u\colon[0,T]\times \D \to \R$ 
	of the PDE in \eqref{setting:PDE} then satisfies that 
		for every
			$t\in [0,T)$, 
			$x\in\partial_\D$
		it holds that
			$\ang{\mathbf{n}(x) ,(\nabla_x u)(t,x)} = 0$
		and that for every
			$t\in [0,T)$, $x\in\D$
		it holds that
			$u(T,x) = \exp (- \tfrac{1}{4}\norm{x}^2)$ and			
		\begin{equation}
			\label{eqMLP:allen_cahn}
			\bpr{\tfrac{\partial}{\partial t}u}(t,x)
			+
			\tfrac{\epsilon^2}{2} (\Delta_x u)(t,x) 
			+ 
			u(t,x) - [u(t,x)]^3 
			- 
			\int_{[-\nicefrac12,\nicefrac12]^d} u(t,\mathbf{x}) - [u(t,\mathbf{x})]^3\, \diff \mathbf{x} 
			=
			0
			.
		\end{equation}
\end{example}

\section{Numerical simulations}
\label{sec:examples}
In this section we illustrate the performance of the machine learning-based approximation method proposed in \cref{def:general_algorithm} in \cref{subsec:algo-Full-gen} above by means of numerical simulations for five concrete (non-local) nonlinear PDEs; see \cref{subsec:fisherKPP_neumann_r,subsec:nonlocalcompPDE,subsec:sinegordon_nonlocal,subsec:aniso_mutator_selector,subsec:allen_cahn} below. In each of these numerical simulations we employ the general machine learning-based approximation method proposed in \cref{def:general_algorithm} with certain $4$-layer neural networks and using the Adam optimizer (cf.\ \eqref{eq:examples_setting_moment_estimation} and \eqref{eq:examples_setting_adam_grad_update} in \cref{frame:adam} below and Kingma \& Ba~\cite{Kingma2014}).

More precisely, in each of the numerical simulations in \cref{subsec:fisherKPP_neumann_r,subsec:nonlocalcompPDE,subsec:sinegordon_nonlocal,subsec:aniso_mutator_selector,subsec:allen_cahn}
the functions 
$\bV^{j,\mathbf s}_n\colon \R^{\mathfrak d}\times\R^d\to\R$
with $ n \in \{ 1, 2, \dots, N\} $,
$ j \in \{ 1, 2, \dots, 8000 \} $,
$ \mathbf{s} \in \R^{\varsigma}$
are implemented as $N$
fully-connected feedforward neural networks.
These neural networks consist of
$ 4 $ layers (corresponding to 3 affine linear transformations in the neural networks) where
the input layer is $d$-dimensional (with $ d $ neurons on the input layer), where
the two hidden layers are both $(d+50)$-dimensional (with $d+50$ neurons on each of the two hidden layers), and where the output layer is $1$-dimensional (with 1 neuron on the output layer).
We refer to \cref{fig:nn} for a graphical illustration of the neural network architecture used in the numerical simulations in \cref{subsec:fisherKPP_neumann_r,subsec:nonlocalcompPDE,subsec:sinegordon_nonlocal,subsec:aniso_mutator_selector,subsec:allen_cahn}.

As activation functions just in front of the two hidden layers we employ, in \cref{subsec:fisherKPP_neumann_r,subsec:nonlocalcompPDE,subsec:sinegordon_nonlocal,subsec:aniso_mutator_selector} below, multidimensional versions of the hyperbolic tangent function
\begin{equation}
	\R \ni x \mapsto (e^x + e^{-x})^{-1} (e^{x} - e^{-x}) \in \R,
\end{equation}
and we employ, in \cref{subsec:allen_cahn} below, multidimensional versions of the ReLU function
\begin{equation}
	\R\ni x\mapsto \max\{x,0\}\in\R.
\end{equation}
In addition, in \cref{subsec:fisherKPP_neumann_r,subsec:nonlocalcompPDE,subsec:aniso_mutator_selector} we use the square function 
$
\R \ni x \mapsto x^2 \in \R
$
as activation function just in front of the output layer and in \cref{subsec:sinegordon_nonlocal,subsec:allen_cahn} we use the identity function 
$
\R \ni x \mapsto x \in \R
$
as activation function just in front of the output layer.
Furthermore, we employ Xavier initialization to initialize all neural network parameters; see Glorot \& Bengio \cite{glorot2010} for details. We did not employ batch normalization in our simulations.

Each of the numerical experiments presented below was performed with the {\sc Julia} library {\sc HighDimPDE.jl} on a NVIDIA TITAN RTX GPU with 1350 MHz core clock and 24 GB GDDR6 memory with 7000 MHz clock rate where the underlying system consisted of an AMD EPYC 7742 64-core CPU with 2TB memory running {\sc Julia} 1.7.2 on Ubuntu 20.04.3.
We refer to \cref{sec:sourcecodes} below for the employed {\sc Julia} source codes.

\begin{figure}
	\centering
	\begin{tikzpicture}[x=4.3cm,y=1.2cm]
		\readlist\Nnod{3,7,7,1} %
		\readlist\Nstr{d,d+50,} %
		\readlist\Cstr{x,h^{(\prev)},{\mathbb{V}^{1,0}_n(\theta,x)}} %
		\def\yshift{0.55} %
		
		\foreachitem \N \in \Nnod{
		\def\lay{\Ncnt} %
		\pgfmathsetmacro\prev{int(\Ncnt-1)} %
		\foreach \i [evaluate={\c=int(\i==\N); 
					\y=\lay>0?\N/2-\i-\c*\yshift:\N/2-\i;
					\x=\lay; 
					\n=\nstyle;
					\index=(\i<\N?int(\i):"\Nstr[\n]");}] in {1,...,\N}{ %
			\node[node \n] (N\lay-\i) at (\x,\y) {$\strut\Cstr[\n]_{\index}$};
			
			\ifnumcomp{\lay}{>}{1}{ %
			\foreach \j in {1,...,\Nnod[\prev]}{ %
				\draw[white,line width=1.2,shorten >=1] (N\prev-\j) -- (N\lay-\i);
				\draw[connect] (N\prev-\j) -- (N\lay-\i);
			}
			}{
			}
			
		}
		\ifnum \lay> 0
			\ifnum \lay<4
				\path (N\lay-\N) --++ (0,1+\yshift) node[midway,scale=1.6] {$\vdots$}; %
			\fi
		\fi
		}
		
		\node[above=.1,align=center,mydarkgreen] at (N1-1.90) {Input layer\\[-0.2em](1st layer)};
		\node[above=.1,align=center,mydarkblue] at (N2-1.90) {1st hidden layer\\[-0.2em](2nd layer)};
		\node[above=.1,align=center,mydarkblue] at (N3-1.90) {2nd hidden layer\\[-0.2em](3rd layer)};
		\node[above=.1,align=center,mydarkred] at (N\Nnodlen-1.90) {Output layer\\[-0.2em](4th layer)};
	\end{tikzpicture}
	\caption{Graphical illustration of the neural network architecture used in the numerical simulations. In \cref{subsec:fisherKPP_neumann_r,subsec:nonlocalcompPDE,subsec:sinegordon_nonlocal,subsec:aniso_mutator_selector,subsec:allen_cahn} we employ neural networks with $4$ layers
	(corresponding to $3$ affine linear transformations in the neural networks) 
	with $d$ neurons on the input layer (corresponding to a $d$-dimensional input layer), 
	with $d + 50$ neurons on the 1st hidden layer (corresponding to a $(d+50)$-dimensional 1st hidden layer),
	with $d + 50$ neurons on the 2nd hidden layer (corresponding to a $(d+50)$-dimensional 2nd hidden layer), and 
	with 1 neuron on the output layer (corresponding to a $1$-dimensional output layer) in the numerical simulations. 
	}
	\label{fig:nn}
\end{figure}
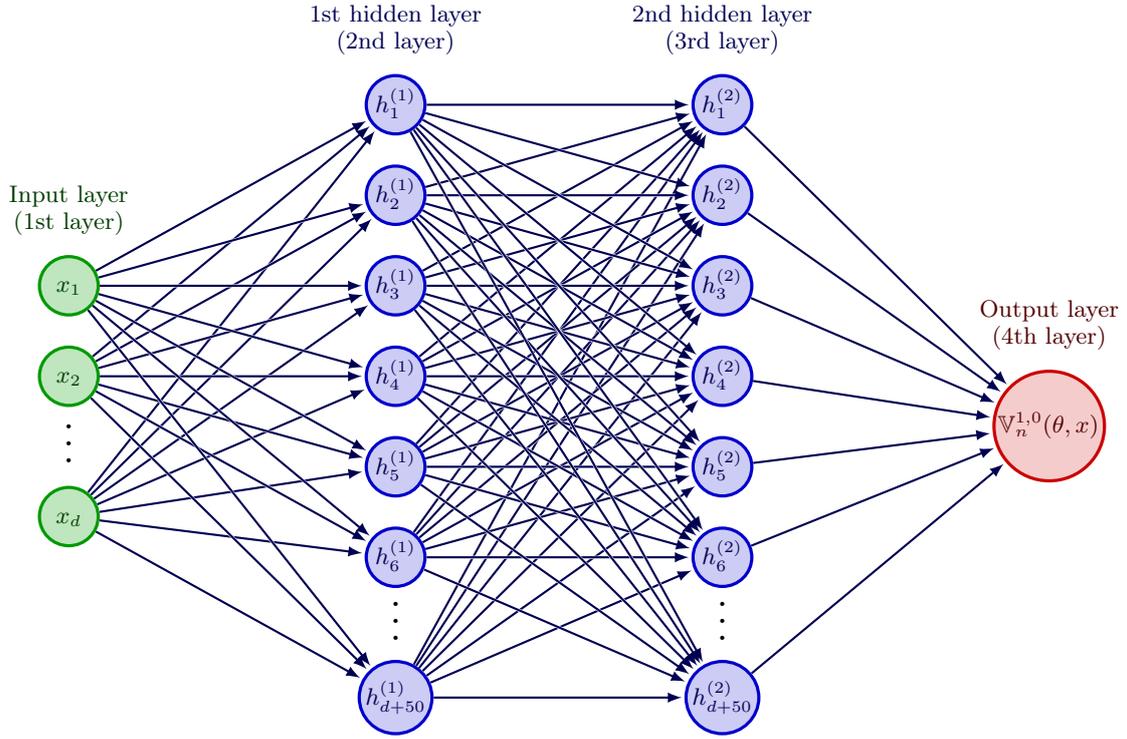

\begin{algo}
	\label{frame:adam}
	Assume 
		\cref{def:general_algorithm}, 
	assume
		$\mathfrak{d}=(d+50)(d+1)+ (d+50)(d+51)+(d+51)$,
	let
		$\varepsilon,\beta_1,\beta_2\in\R$,
		$(\gamma_m)_{m\in\N}\subseteq (0,\infty)$
	satisfy
		$\varepsilon=10^{-8}$,
		$\beta_1 = \tfrac{9}{10}$,
		and $\beta_2 = \tfrac{999}{1000}$,
	let
		$g\colon D \to \R$,
		$\mu\colon D\to\R^d$, and
		$\sigma\colon D\to\R^{d\times d}$
		be continuous,
	let %
		$u=(u(t,x))_{(t,x) \in [0,T]\times \D} \in C^{1,2}([0,T]\times \D,\R)$
		have at most polynomially growing partial derivatives, 
		assume 
			for every
				$t\in (0,T]$,
				$x\in \partial_\D$
			that
				$ \ang*{\mathbf{n}(x) ,(\nabla_x u)(t,x)} = 0$,
	assume 
		for every
			$t\in [0,T]$,
			$x\in\D$,
			$j \in \N$,
			$\mathbf{s} \in \R^\varsigma$
		that
			$u(0,x)=g(x) = \mathbb V^{ j, \mathbf{s} }_0( \theta, x )$,
			$\int_\D \babs{f\bpr{t,x,\mathbf{x}, u(t,x),u(t,\mathbf{x})
			} } \, \nu_x(\diff \mathbf{x}) < \infty$, and
		\begin{equation}
		\begin{split}
		\label{eq:PDE-Examples}
			\bpr{\tfrac{\partial}{\partial t}u}(t,x)
			&=
			\tfrac{ 1 }{ 2 } \Trace\bpr{\sigma(x) [ \sigma(x) ]^*( \Hess_x u)( t,x )}
			+ \bang{ \mu(x), ( \nabla_x u )( t,x ) }
			\\& \quad 
			+ \int_{\D} f\bpr{t,x,\mathbf{x}, u(t,x),u(t,\mathbf{x})
			} \, \nu_x(\diff\mathbf{x}) 
			,
		\end{split}
		\end{equation}
	assume 
		for every
			$m\in\N$,
			$i\in\{0,1,\ldots,N\} $ 
		that
			$ J_m = 8000$,
			$ t_i = \tfrac{iT}{N} $, and
			$ \varrho = 2 \mathfrak{d} $,
	and assume 
		for every
			$n \in \{1,2,\dots,N\}$,
			$m\in\N$,
			$x=(x_1, \ldots, x_{\mathfrak{d}})$, $y=(y_1, \ldots, y_{\mathfrak{d}})$, $\eta = ( \eta_1, \ldots , \eta_{\mathfrak{d}} )\in\R^{\mathfrak{d}}$
		that
		\begin{align}\label{eq:examples_setting_moment_estimation}
			\Xi^n_0(x,y,\eta) = 0,\quad 
			\Psi^n_m ( x , y , \eta )
			=
			\bpr{\beta_1 x + (1-\beta_1) \eta, \beta_2 y + (1-\beta_2) ((\eta_1)^2,\ldots,(\eta_{\mathfrak{d}})^2)},
		\end{align}
		and
		\begin{align}\label{eq:examples_setting_adam_grad_update}
			\psi^n_m ( x,y ) 
			=
			\bbbpr{
			\bbbr{
			\sqrt{\tfrac{\abs{y_1}}{1-(\beta_2)^m}} + \varepsilon
			}^{-1}
			\frac{\gamma_m x_{1}}{1-(\beta_1)^m},
			\ldots,
			\bbbr{
			\sqrt{\tfrac{\abs{y_{\mathfrak{d}}}}{1-(\beta_2)^m}} + \varepsilon
			}^{-1}
			\frac{\gamma_m x_{\mathfrak{d}}}{1-(\beta_1)^m}
			}
			.
	\end{align}
\end{algo}

\subsection{Fisher--KPP PDEs with Neumann boundary conditions}
\label{subsec:fisherKPP_neumann_r}
In this subsection we use the machine learning-based approximation method in \cref{frame:adam} to approximately calculate the solutions of certain Fisher--KPP PDEs with Neumann boundary conditions (cf., e.g., Bian et al.~\cite{Bian2017} and Wang et al.~\cite{Wang2021}).

Assume 
	\cref{frame:adam},
let
	$\epsilon\in(0,\infty)$
satisfy
	$\epsilon = \tfrac{1}{10}$,
assume that
	$d\in\{1,2,5,10\}$,
	$D = [-\nicefrac{1}{2},\nicefrac{1}{2}]^d$,
	$T\in\{\nicefrac{1}{5},\nicefrac{1}{2},1\}$,
	$N=10$,
	$K_1 = K_2 = \ldots = K_N= 1$, and
	$M_1 = M_2 = \ldots = M_N = 500$,
assume 
	for every 
		$n,m,j\in \N$, $\omega \in \Omega$ 
	that 
		$\xi^{n,m,j}(\omega)=(0,\dots,0)$,
assume 
	for every 
		$m \in \N$
	that
		$\gamma_m = 10^{-2}$,
and assume 
	for every 
		$s,t \in [0,T]$,
		$x,\mathbf{x} \in \D$,
		$y,\mathbf{y} \in \R$,
		$v
		\in \R^d$ 
	that
		$g(x)=  \exp (-\tfrac{1}{4}\norm{x}^2)$,
		$\mu(x)=(0,\dots,0)$,
		$\sigma(x) v =\epsilon v$, 
		$f(t,x,\mathbf{x},y,\mathbf{y}
		)= y(1-y)$, and
		\begin{equation}
			\label{eq:Hfkpp}
			H(t,s,x,v) 
			= 
			R(x,x+\mu(x)(t-s)+\sigma(x)v)
			=
			R(x,x+\epsilon v)
		\end{equation} 
		(cf.\ \eqref{Y-algo-spez} and \eqref{eq:FormalXapprox}).
The solution 
	$u\colon[0,T]\times \D \to \R$ 
	of the PDE in \eqref{eq:PDE-Examples} then satisfies that 
		for every
			$t\in (0,T]$, 
			$x\in\partial_\D$
		it holds that
			$\ang*{\mathbf{n}(x) ,(\nabla_x u)(t,x)} = 0$
	and that 
		for every
			$t\in [0,T]$, $x\in\D$
		it holds that
			$u(0,x) =  \exp (-\tfrac{1}{4}\norm{x}^2)$ and
		\begin{equation}
			\label{eq:fisherKPP_neumann_r}
    		\bpr{\tfrac{\partial }{\partial t}u} (t,x) 
			=
			\tfrac{\epsilon^2}{2}(\Delta_x u) (t,x) + u(t,x)\bpr{1 - u(t,x) }
			.
		\end{equation}
In \eqref{eq:fisherKPP_neumann_r} the function $u\colon [0,T]\times D\to\R$ models the proportion of a particular type of alleles in a biological population spatially structured over $D$.
For every $t\in[0,T]$, $x\in\R^d$ the number $u(t,x) \in \R$ describes the proportion of individuals with a particular type of alleles located at position $x = (x_1, \dots, x_d) \in \R^d$ at time $t \in [0,T]$.
In \cref{table:fisherKPP_neumann_r} 
we use the machine learning-based approximation method
in \cref{frame:adam}
to approximately calculate
the mean of %
$
\bV^{1,0}_N(\Theta^N_{M_N},\allowbreak (0,\ldots,0))
$,
the standard deviation of %
$
\bV^{1,0}_N(\Theta^N_{M_N},\allowbreak (0,\ldots,0))
$,
the relative $ L^1 $-ap\-prox\-i\-ma\-tion error associated to %
$
\bV^{1,0}_N(\Theta^N_{M_N},\allowbreak (0,\ldots,0))
$,
the uncorrected sample standard deviation of the approximation error associated to %
$
\bV^{1,0}_N(\Theta^N_{M_N},\allowbreak (0,\ldots,0))
$,
and the average runtime in seconds needed for calculating one realization of $
\bV^{1,0}_N(\Theta^N_{M_N},\allowbreak (0,\ldots,0))
$
based on $5$ independent realizations (5 independent runs).
The reference value, which is used as an approximation for the unknown value $u(T,(0,\ldots,0))$ of the exact solution of \eqref{eq:fisherKPP_neumann_r}, has been calculated via the MLP approximation method for non-local nonlinear PDEs in \cref{frame:mlpsetting} (cf.~\cref{exampleMLP:fisherkpp_neumann} and Beck et al.~\cite[Remark~3.3]{Beck2017a}).

\begin{table}
	\begin{center}
		\resizebox{\textwidth}{!}{
		\begin{approxtabular}
			$1$ & $\nicefrac{1}{5}$ & $10$ & $0.9995902$ & $0.0000107$ & $0.9996057$ & $0.0000155$ & $0.0000107$ & $24.887$\\
			$2$ & $\nicefrac{1}{5}$ & $10$ & $0.9991759$ & $0.0000191$ & $0.9991887$ & $0.0000186$ & $0.0000116$ & $26.175$\\
			$5$ & $\nicefrac{1}{5}$ & $10$ & $0.9979572$ & $0.0000388$ & $0.9979693$ & $0.0000303$ & $0.0000235$ & $27.312$\\
			$10$ & $\nicefrac{1}{5}$ & $10$ & $0.9959224$ & $0.0000341$ & $0.9959337$ & $0.0000275$ & $0.0000196$ & $28.972$\\\hline
			$1$ & $\nicefrac{1}{2}$ & $10$ & $0.9992463$ & $0.0000341$ & $0.9992572$ & $0.0000237$ & $0.0000248$ & $26.631$\\
			$2$ & $\nicefrac{1}{2}$ & $10$ & $0.9984982$ & $0.0000287$ & $0.9985442$ & $0.0000460$ & $0.0000287$ & $27.007$\\
			$5$ & $\nicefrac{1}{2}$ & $10$ & $0.9962227$ & $0.0000330$ & $0.9962314$ & $0.0000306$ & $0.0000041$ & $27.632$\\
			$10$ & $\nicefrac{1}{2}$ & $10$ & $0.9925257$ & $0.0001663$ & $0.9921744$ & $0.0003541$ & $0.0001676$ & $28.743$\\\hline
			$1$ & $1$ & $10$ & $0.9991423$ & $0.0000331$ & $0.9989768$ & $0.0001657$ & $0.0000332$ & $26.601$\\
			$2$ & $1$ & $10$ & $0.9982349$ & $0.0000782$ & $0.9982498$ & $0.0000605$ & $0.0000430$ & $26.965$\\
			$5$ & $1$ & $10$ & $0.9956516$ & $0.0000853$ & $0.9957053$ & $0.0000839$ & $0.0000466$ & $27.428$\\
			$10$ & $1$ & $10$ & $0.9912297$ & $0.0001072$ & $0.9904936$ & $0.0007431$ & $0.0001083$ & $28.521$\\
		\end{approxtabular}
		}
	\end{center}
	\caption{Numerical simulations for the approximation method in \cref{def:general_algorithm} in the case of the Fisher--KPP PDEs with Neumann boundary conditions in \eqref{eq:fisherKPP_neumann_r} in \cref{subsec:fisherKPP_neumann_r}.
	\label{table:fisherKPP_neumann_r}}
\end{table}

\subsection{Non-local competition PDEs}
\label{subsec:nonlocalcompPDE}
In this subsection we use the machine learning-based approximation method in \cref{frame:adam}
to approximately calculate the solutions of certain non-local competition PDEs (cf., e.g., Doebeli \& Ispolatov \cite{Doebeli2010}, Berestycki et al.~\cite{Berestycki2009b}, Perthame \& Génieys \cite{Perthame2007}, and Génieys et al.~\cite{Genieys2006a}).

Assume 
	\cref{frame:adam}, 
let
	$\mathfrak s,\epsilon\in(0,\infty)$
satisfy
	$\mathfrak{s} = \epsilon =\tfrac{1}{10}$,
assume that
	$d\in\{1,2,5,10\}$,
	$\D = \R^d$,
	$T\in\{\nicefrac{1}{5},\nicefrac{1}{2},1\}$,
	$N=10$,
	$K_1 = K_2 = \ldots = K_N= 5$, and
	$M_1 = M_2 = \ldots = M_N = 500$,
assume 
	for every 
		$n,m,j\in \N$, $\omega \in \Omega$ 
	that 
		$\xi^{n,m,j}(\omega)=(0,\dots,0)$,
assume
	for every 
		$m \in \N$
	that
		$\gamma_m = 10^{-2}$,
and assume 
	for every 
		$s,t \in [0,T]$, 
		$v,x,\mathbf{x}%
		\in \R^d$, 
		$y,\mathbf{y} \in \R$,
		$A \in \mathcal{B}(\R^d)$
	that
		$\nu_x(A) = \pi^{-\nicefrac{d}{2}}\mathfrak{s}^{-d}\int_A \exp\pr*{-\mathfrak{s}^{-2}\norm{x - \mathbf{x}}^2}\,\diff\mathbf{x}$,
		$g(x)= \exp (-\tfrac{1}{4}\norm{x}^2)$,
		$\mu(x)=(0,\dots,0)$,
		$\sigma(x) v=\epsilon v$, 
		$f(t,x,\mathbf{x},y,\mathbf{y}%
		)=  y(1 - \mathbf{y} \mathfrak{s}^d\pi^{\nicefrac{d}{2}})$, and
	\begin{equation}
		\label{eq:Hcomp}
		H(t,s,x,v)
		=
		x + \mu(x)(t-s)+ \sigma(x)v
		=
		x+\epsilon v
	\end{equation}
	(cf.\ \eqref{Y-algo-spez} and \eqref{eq:FormalXapprox}).
The solution 
	$u\colon[0,T]\times \R^d \to \R$ 
	of the PDE in \eqref{eq:PDE-Examples} then satisfies that 
		for every
			$t\in [0,T]$, $x\in\R^d$ 
		it holds that 
			$u(0,x)=\exp (-\tfrac{1}{4}\norm{x}^2)$ and
		\begin{equation}
			\label{eq:nonlocalcompPDE}
 			\bpr{\tfrac{\partial}{\partial t}u}(t,x)
 			=
 			\tfrac{\epsilon^2}{2}(\Delta_x u)(t,x) + u(t,x) \pr*{1 - \int_{\R^d} u(t,\mathbf{x}) \exp \bpr{-\tfrac{\norm{x-\mathbf{x}}^2}{\mathfrak{s}^2} } \, \diff \mathbf{x} }
			.
		\end{equation}
In \eqref{eq:nonlocalcompPDE} the function $u\colon[0,T]\times\R^d\to\R$ models the evolution of a population characterized by a set of $d$ biological traits under the combined effects of selection, competition and mutation. 
For every $t\in[0,T]$, $x\in\R^d$ the number $u(t,x) \in \R$ describes the number of individuals with traits $x = (x_1, \dots, x_d) \in \R^d$ at time $t \in [0,T]$.
In \cref{table:nonlocalcompPDE}
we use the machine learning-based approximation method
in \cref{frame:adam}
to approximately calculate
the mean of %
$
\bV^{1,0}_N(\Theta^N_{M_N},(0,\ldots,0))
$,
the standard deviation of %
$
\bV^{1,0}_N(\Theta^N_{M_N},(0,\ldots,0))
$,
the relative $ L^1 $-approximation error associated to %
$
\bV^{1,0}_N(\Theta^N_{M_N},(0,\ldots,0))
$,
the uncorrected sample standard deviation of the approximation error associated to %
$
\bV^{1,0}_N(\Theta^N_{M_N},(0,\ldots,0))
$,
and the average runtime in seconds needed for calculating one realization of $
\bV^{1,0}_N(\Theta^N_{M_N},(0,\ldots,0))
$
based on $5$ independent realizations (5 independent runs).
The reference value, which is used as an approximation for the unknown value $u(T,(0,\ldots,0))$ of the exact solution of \eqref{eq:nonlocalcompPDE},  has been calculated via the MLP approximation method for non-local nonlinear PDEs in \cref{frame:mlpsetting} (cf.~\cref{exampleMLP:nonlocal_comp} and Beck et al.~\cite[Remark~3.3]{Beck2017a}).

\begin{table}
	\begin{center}
		\resizebox{\textwidth}{!}{\begin{approxtabular}
			$1$ & $\nicefrac{1}{5}$ & $5$ & $1.1748404$ & $0.0006512$ & $1.1735975$ & $0.0010591$ & $0.0005549$ & $20.571$\\
			$2$ & $\nicefrac{1}{5}$ & $5$ & $1.2114236$ & $0.0008700$ & $1.2096305$ & $0.0014823$ & $0.0007193$ & $25.042$\\
			$5$ & $\nicefrac{1}{5}$ & $5$ & $1.2186650$ & $0.0007070$ & $1.2159038$ & $0.0022709$ & $0.0005814$ & $54.644$\\
			$10$ & $\nicefrac{1}{5}$ & $5$ & $1.2153864$ & $0.0007789$ & $1.2128666$ & $0.0020776$ & $0.0006422$ & $74.331$\\\hline
			$1$ & $\nicefrac{1}{2}$ & $5$ & $1.4755801$ & $0.0032738$ & $1.4694976$ & $0.0041392$ & $0.0022278$ & $20.182$\\
			$2$ & $\nicefrac{1}{2}$ & $5$ & $1.6112576$ & $0.0110426$ & $1.5948898$ & $0.0103067$ & $0.0068414$ & $25.178$\\
			$5$ & $\nicefrac{1}{2}$ & $5$ & $1.6433913$ & $0.0067468$ & $1.6186897$ & $0.0152602$ & $0.0041681$ & $53.618$\\
			$10$ & $\nicefrac{1}{2}$ & $5$ & $1.6323552$ & $0.0053956$ & $1.6090688$ & $0.0144720$ & $0.0033532$ & $73.648$\\\hline
			$1$ & $1$ & $5$ & $2.0795628$ & $0.0223341$ & $2.0493301$ & $0.0147525$ & $0.0108982$ & $19.836$\\
			$2$ & $1$ & $5$ & $2.5651031$ & $0.0513671$ & $2.4683060$ & $0.0392160$ & $0.0208107$ & $24.700$\\
			$5$ & $1$ & $5$ & $2.6977694$ & $0.0381160$ & $2.5606137$ & $0.0535636$ & $0.0148855$ & $52.343$\\
			$10$ & $1$ & $5$ & $2.6490054$ & $0.0155291$ & $2.5299994$ & $0.0470380$ & $0.0061380$ & $73.186$\\
			\hline
		\end{approxtabular}}
	\end{center}
	\caption{Numerical simulations for the approximation method in \cref{def:general_algorithm} in the case of the non-local competition PDEs in \eqref{eq:nonlocalcompPDE} in \cref{subsec:nonlocalcompPDE}.
	\label{table:nonlocalcompPDE}
	}
\end{table}

\subsection{Non-local sine-Gordon type PDEs}
\label{subsec:sinegordon_nonlocal}
In this subsection we use the machine learning-based approximation method in \cref{frame:adam}
to approximately calculate the solutions of non-local sine-Gordon type PDEs 
(cf., e.g., Hairer \& Shen \cite{Hairer2016}, Barone et al.~\cite{Barone1971}, and Coleman \cite{Coleman1994}).

Assume 
	\cref{frame:adam},
let
	$\mathfrak s,\epsilon\in(0,\infty)$
satisfy
	$\mathfrak{s} = \epsilon =\tfrac{1}{10}$,
assume that
	$d\in\{1,2,5,10\}$,
	$\D = \R^d$,
	$T\in\{\nicefrac{1}{5},\nicefrac{1}{2},1\}$,
	$N=10$,
	$K_1 = K_2 = \ldots = K_N= 5$, and
	$M_1 = M_2 = \ldots = M_N = 500$,
assume 
	for every 
		$n,m,j\in \N$, 
		$\omega \in \Omega$
	that 
		$\xi^{n,m,j}(\omega)=(0,\dots,0)$,
assume 
	for every 
		$m \in \N$
	that
		$\gamma_m = 10^{-3}$,
and assume 
	for every 
		$s,t \in [0,T]$, 
		$v,x,\mathbf{x}%
		\in \R^d$, 
		$y,\mathbf{y} \in \R$,
		$A \in \mathcal{B}(\R^d)$
	that
		$\nu_x(A) = \pi^{-\nicefrac{d}{2}}\mathfrak{s}^{-d}\int_A \exp\pr*{-\mathfrak{s}^{-2}\norm{x - \mathbf{x}}^2}\,\diff\mathbf{x}$,
		$g(x)= \exp (-\tfrac{1}{4}\norm{x}^2)$,
		$\mu(x)=(0,\dots,0)$,
		$\sigma(x) v = \epsilon v$, 
		$f(t,x,\mathbf{x},y,\mathbf{y}%
		)=  \sin(y) - \mathbf{y} \pi^{\nicefrac{d}{2}}\mathfrak{s}^d$, 
		and
	\begin{equation}
		\label{eq:Hsinegordon}
		H(t,s,x,v)
		=
		x + \mu(x)(t-s)+ \sigma(x)v
		=
		x+\epsilon v
	\end{equation}
	(cf.\ \eqref{Y-algo-spez} and \eqref{eq:FormalXapprox}).
The solution 
	$u\colon[0,T]\times \R^d \to \R$ 
	of the PDE in \eqref{eq:PDE-Examples} then satisfies that 
		for every
			$t\in [0,T]$, 
			$x\in\R^d$ 
		it holds that 
			$u(0,x)=\exp (-\tfrac{1}{4}\norm{x}^2)$ and
		\begin{equation}
			\label{eq:sinegordon_nonlocal}
			\bpr{\tfrac{\partial}{\partial t}u}(t,x)
			=
			\tfrac{\epsilon^2}{2}(\Delta_x u)(t,x) + \sin ( u(t,x) ) - \int_{\R^d} u(t,\mathbf{x})\, \exp\bpr{-\tfrac{\norm{x-\mathbf{x}}^2}{\mathfrak{s}^2}}\,\diff\mathbf{x} 
			.
		\end{equation}
In \cref{table:sinegordon_nonlocal} 
we use the machine learning-based approximation method
in \cref{frame:adam} 
to approximately calculate
the mean of %
$
\bV^{1,0}_N(\Theta^N_{M_N},\allowbreak (0,\ldots,0))
$,
the standard deviation of %
$
\bV^{1,0}_N(\Theta^N_{M_N},\allowbreak (0,\ldots,0))
$,
the relative $ L^1 $-approximation error associated to %
$
\bV^{1,0}_N(\Theta^N_{M_N},\allowbreak (0,\ldots,\allowbreak 0))
$,
the uncorrected sample standard deviation of the approximation error associated to %
$
\bV^{1,0}_N(\Theta^N_{M_N},\allowbreak (0,\ldots,0))
$,
and the average runtime in seconds needed for calculating one realization of $
\bV^{1,0}_N(\Theta^N_{M_N},\allowbreak (0,\ldots,0))
$
based on $5$ independent realizations (5 independent runs).
The reference value, which is used as an approximation for the unknown value $u(T,(0,\ldots,0))$ of the exact solution of \eqref{eq:sinegordon_nonlocal},  has been calculated via the MLP approximation method for non-local nonlinear PDEs in \cref{frame:mlpsetting} (cf.~\cref{exampleMLP:sinegordon_nonlocal} and Beck et al.~\cite[Remark~3.3]{Beck2017a}).

\begin{table}
	\begin{center}
		\resizebox{\textwidth}{!}{\begin{approxtabular}
			$1$ & $\nicefrac{1}{5}$ & $10$ & $1.1363013$ & $0.0000101$ & $1.1366512$ & $0.0003079$ & $0.0000089$ & $23.635$\\
			$2$ & $\nicefrac{1}{5}$ & $10$ & $1.1678476$ & $0.0000118$ & $1.1685004$ & $0.0005586$ & $0.0000101$ & $24.788$\\
			$5$ & $\nicefrac{1}{5}$ & $10$ & $1.1731812$ & $0.0000087$ & $1.1740671$ & $0.0007546$ & $0.0000074$ & $24.233$\\
			$10$ & $\nicefrac{1}{5}$ & $10$ & $1.1704700$ & $0.0000063$ & $1.1715686$ & $0.0009377$ & $0.0000054$ & $24.767$\\
			\hline
			$1$ & $\nicefrac{1}{2}$ & $10$ & $1.3514235$ & $0.0000152$ & $1.3529022$ & $0.0010930$ & $0.0000112$ & $22.622$\\
			$2$ & $\nicefrac{1}{2}$ & $10$ & $1.4393708$ & $0.0000245$ & $1.4423641$ & $0.0020753$ & $0.0000170$ & $23.419$\\
			$5$ & $\nicefrac{1}{2}$ & $10$ & $1.4546282$ & $0.0000816$ & $1.4598476$ & $0.0035754$ & $0.0000559$ & $23.739$\\
			$10$ & $\nicefrac{1}{2}$ & $10$ & $1.4473282$ & $0.0000739$ & $1.4503958$ & $0.0021150$ & $0.0000510$ & $24.222$\\
			\hline
			$1$ & $1$ & $10$ & $1.7114614$ & $0.0000309$ & $1.7136091$ & $0.0012533$ & $0.0000180$ & $22.067$\\
			$2$ & $1$ & $10$ & $1.9019763$ & $0.0000288$ & $1.9062322$ & $0.0022326$ & $0.0000151$ & $22.707$\\
			$5$ & $1$ & $10$ & $1.9364921$ & $0.0000602$ & $1.9411610$ & $0.0024052$ & $0.0000310$ & $22.899$\\
			$10$ & $1$ & $10$ & $1.9223347$ & $0.0001494$ & $1.9272222$ & $0.0025360$ & $0.0000775$ & $23.719$\\
			\hline
		\end{approxtabular}}
	\end{center}
	\caption{Numerical simulations for the approximation method in \cref{def:general_algorithm} in the case of the non-local sine-Gordon PDEs in \eqref{eq:sinegordon_nonlocal} in \cref{subsec:sinegordon_nonlocal}.
	\label{table:sinegordon_nonlocal}}
\end{table}

\subsection{Replicator-mutator PDEs}
\label{subsec:aniso_mutator_selector}
In this subsection we use the machine learning-based approximation method in \cref{frame:adam} to approximately calculate the solutions of certain replicator-mutator PDEs describing the dynamics of a phenotype distribution under the combined effects of selection and mutation (cf., e.g., Hamel et al.~\cite{Hamel2020}).

Assume 
	\cref{frame:adam}, 
let
	$\mathcal D\subseteq \R^d$,
	$\mathfrak{m}_1,\mathfrak{m}_2,\dots,\mathfrak{m}_d,\mathfrak{s}_1,\mathfrak{s}_2,\dots,\mathfrak{s}_d,\mathfrak{u}_1, \mathfrak{u}_2, \dots,\mathfrak{u}_d,\mathfrak t \in \R$
satisfy 
	for every
		$k \in \{1,2,\dots,d\}$
	that
		$\mathfrak{m}_k = \tfrac{1}{10}$,
		$\mathfrak{s}_k = \tfrac{1}{20}$,
		$\mathfrak{u}_k = 0$,
		and
		$\mathfrak t=\tfrac1{50}$,
assume that
	$d\in\{1,2,5,10\}$,
	$\D = \R^d$,
	$T\in\{\nicefrac1{10},\nicefrac{1}{5},\nicefrac{1}{2}\}$,
	$N=10$,
	$K_1 = K_2 = \ldots = K_N= 5$,
let
	$a\in C(\R^d,\R)$,
	$\delta\in C(\R^d,(0,\infty))$
satisfy 
	for every
		$x \in \R^d$
	that
		$a(x) = -\frac{1}{2}\norm{x}^2$,
and assume 
	for every 
		$s,t \in [0,T]$,
		$v= (v_1,\dots,v_d),\,x = (x_1,\dots,x_d)\in \R^d$, 
		$\mathbf{x} \in \R^d$,
		$y,\mathbf{y} \in \R$,
		$A\in\Borel(\R^d)$
	that
		$\nu_x(A)=\int_{A\cap \mathcal D}\delta(\mathbf x)\,\diff\mathbf{x}$,
		$g(x)= (2\pi)^{-\nicefrac{d}{2}}\bbr{ \prod_{ i = 1 }^d \abs{\mathfrak{s}_i}^{-\nicefrac{1}{2}}} \exp \bpr{-\sum_{i = 1}^d  \frac{(x_i - \mathfrak{u}_i )^2}{2\mathfrak{s}_i}}$,
		$\mu(x)=(0,\dots,0)$,
		$\sigma(x)v=(\mathfrak{m}_1 v_1, \dots, \mathfrak{m}_d v_d)$, 
		$f(t,x,\mathbf{x},y,\mathbf{y}
		) = y(a(x)-\mathbf y a(\mathbf x)[\delta(\mathbf x)]^{-1})$,
		and
	\begin{equation}
		\label{eq:Hrepmut}
		H(t,s,x,v)
		=
		x + \mu(x)(t-s)+ \sigma(x)v
		=
		x+(\mathfrak m_1v_1,\dots,\mathfrak m_dv_d)
	\end{equation}
	(cf.\ \eqref{Y-algo-spez} and \eqref{eq:FormalXapprox}).
The solution 
	$u\colon[0,T]\times \R^d \to \R$ 
	of the PDE in \eqref{eq:PDE-Examples} then satisfies that 
		for every
			$t\in [0,T]$,
			$x = (x_1, \dots, x_d) \in \R^d$
		it holds that
			\begin{equation}
				u(0,x)= (2\pi)^{-\nicefrac{d}{2}}\bbbbr{ \smallprodl_{ i = 1 }^d \abs{\mathfrak{s}_i}^{-\nicefrac{1}{2}}} \exp \bbpr{-\smallsuml_{i = 1}^d  \tfrac{(x_i - \mathfrak{u}_i )^2}{2\mathfrak{s}_i}}
			\end{equation} 
			and
\begin{equation}
	\bpr{\tfrac{\partial }{\partial t}u} (t,x) 
	=
	u(t,x)\pr*{a(x) - \int_{\mathcal D} u(t,\mathbf{x})\, a(\mathbf{x}) \, \diff\mathbf{x} } 
	+  
	\smallsuml_{i=1}^{d} \tfrac{1}{2} \abs{\mathfrak{m}_i}^2 \bpr{\tfrac{\partial^2 }{\partial x_i^2} u} (t,x)
	.
   \label{eq:aniso_mutator_selector}
\end{equation}
In \eqref{eq:aniso_mutator_selector} the function $u\colon[0,T]\times\R^d\to\R$ models the evolution of the phenotype distribution of a population composed of a set of $d$ biological traits under the combined effects of selection and mutation. 
For every $t\in[0,T]$, $x\in\R^d$ the number $u(t,x) \in \R$ describes the number of individuals with traits $x = (x_1, \dots, x_d) \in \R^d$ at time $t \in [0,T]$.
The function $a$ models a quadratic Malthusian fitness function.

In \cref{table:aniso_mutator_selector} 
we use the machine learning-based method
in \cref{frame:adam} to approximately solve the PDE in
\cref{eq:aniso_mutator_selector} above in the case
$\mathcal D=\R^d$.
More precisely, we 
assume for every
	$n,m,j\in\N$
that
	$\xi^{n,m,j}=0$,
	$\gamma_m = \nicefrac{1}{100}$,
	$M_n=1000$
and we assume for every
	$\mathbf x\in\R^d$
that
	$\delta(\mathbf x)=(2\pi)^{-\nicefrac d2}\mathfrak t^{-d}\exp\bpr{-\tfrac{\norm{\mathbf x}^2}{2\mathfrak t^2}}$
to approximately calculate
the mean of %
$
\bV^{1,0}_N(\Theta^N_{M_N},\allowbreak (0,\ldots,0))
$,
the standard deviation of %
$
\bV^{1,0}_N(\Theta^N_{M_N},\allowbreak (0,\ldots,0))
$,
the relative $ L^1 $-approximation error associated to %
$
\bV^{1,0}_N(\Theta^N_{M_N},\allowbreak (0,\ldots,\allowbreak 0))
$,
the uncorrected sample standard deviation of the approximation error associated to %
$
\bV^{1,0}_N(\Theta^N_{M_N},\allowbreak (0,\ldots,0))
$,
and the average runtime in seconds needed for calculating one realization of $
\bV^{1,0}_N(\Theta^N_{M_N},\allowbreak (0,\ldots,0))
$
based on $5$ independent realizations (5 independent runs).
The value $u(T,(0,\ldots,0))$ of the exact solution of \eqref{eq:aniso_mutator_selector} has been calculated by means of \cref{lem:aniso_mutator_selector} below.

In \cref{fig:rep_mut} we use the machine learning-based method in \cref{frame:adam} to approximate the solution $u\colon[0,T]\times\R^d\to\R$ of the PDE in \cref{eq:aniso_mutator_selector} above with $d=5$, $T=\nicefrac12$, and $\mathcal D=\R^d$. The right-hand side of \cref{fig:rep_mut} shows a plot of $[-\nicefrac{1}{4}, \nicefrac{1}{4}] \ni x \mapsto u(t, (x, 0,  \dots,0)) \in \R$ for $t\in\{0,0.05,0.1,0.15\}$ where $u$ is the exact solution of the PDE in \cref{eq:aniso_mutator_selector} with $d=5$, $T=\nicefrac12$, and $\mathcal D=\R^d$ computed via \cref{eq:lem_aniso} in \cref{lem:aniso_mutator_selector} below. The left-hand side of \cref{fig:rep_mut} shows a plot of $[-\nicefrac{1}{4}, \nicefrac{1}{4}] \ni x \mapsto \bV_n^{1,0}(\Theta^n_{M_n}(\omega),(x, 0, \dots,0)) \in \R$ for $n\in\{0,1,2,3\}$ and one realization $\omega\in\Omega$ where the functions $\R^d\ni x\mapsto \bV_n^{1,0}(\Theta^n_{M_n}(\omega),x)\in\R$ for $n\in\{0,1,2,3\}$, $\omega\in\Omega$ were computed via \cref{frame:adam} as an approximation of the solution of the PDE in \cref{eq:aniso_mutator_selector} with $d=5$, $T=\nicefrac12$, and $\mathcal D=[-\nicefrac 12,\nicefrac 12]^d$. For the approximation, we take
$M_1=M_2=\ldots=M_N=2000$,
$\gamma_1=\gamma_2=\dots=\gamma_{2000}=\nicefrac{1}{200}$,
and
$\delta=\1_{\R^d}$
and we take
$\xi^{n,m,j}\colon\Omega\to\R^d$, $n, m, j \in \N$,
to be independent $\mathcal{U}_{[-\nicefrac{1}{2},\nicefrac{1}{2}]^d}$-distributed random variables.
Note that the solution of the PDE in \cref{eq:aniso_mutator_selector} in the case $\mathcal D=[-R,R]^d$ with $R\in(0,\infty)$ sufficiently large is a good approximation of the solution $u\colon[0,T]\times \R^d\to\R$ of the PDE in \cref{eq:aniso_mutator_selector} in the case $\mathcal D=\R^d$ since we have that for all $t\in[0,T]$ the value $u(t,x)$ of the solution $u$ of the PDE in \cref{eq:aniso_mutator_selector} in the case $\mathcal D=\R^d$ quickly tends to $0$ as $\norm x$ tends to $\infty$.

\begin{table}
	\begin{center}
		\resizebox{\textwidth}{!}{\begin{approxtabular}
			$1$ & $\nicefrac{1}{10}$ & $10$ & \multicolumn{1}{r|}{$1.7650547$} & $0.0048907$ & \multicolumn{1}{r|}{$1.7709574$} & $0.0033330$ & $0.0027616$ & $43.949$\\
			$2$ & $\nicefrac{1}{10}$ & $10$ & \multicolumn{1}{r|}{$3.1210874$} & $0.0015513$ & \multicolumn{1}{r|}{$3.1362901$} & $0.0048474$ & $0.0004946$ & $45.002$\\
			$5$ & $\nicefrac{1}{10}$ & $10$ & \multicolumn{1}{r|}{$17.1948978$} & $0.0160821$ & \multicolumn{1}{r|}{$17.4196954$} & $0.0129048$ & $0.0009232$ & $45.934$\\
			$10$ & $\nicefrac{1}{10}$ & $10$ & \multicolumn{1}{r|}{$295.8776489$} & $0.0572639$ & \multicolumn{1}{r|}{$303.4457874$} & $0.0249407$ & $0.0001887$ & $47.750$\\
			\hline
			$1$ & $\nicefrac{1}{5}$ & $10$ & \multicolumn{1}{r|}{$1.7499938$} & $0.0005580$ & \multicolumn{1}{r|}{$1.7582066$} & $0.0046711$ & $0.0003174$ & $43.129$\\
			$2$ & $\nicefrac{1}{5}$ & $10$ & \multicolumn{1}{r|}{$3.0621917$} & $0.0027811$ & \multicolumn{1}{r|}{$3.0912904$} & $0.0094131$ & $0.0008996$ & $44.443$\\
			$5$ & $\nicefrac{1}{5}$ & $10$ & \multicolumn{1}{r|}{$16.3846066$} & $0.0139748$ & \multicolumn{1}{r|}{$16.8015567$} & $0.0248162$ & $0.0008318$ & $45.019$\\
			$10$ & $\nicefrac{1}{5}$ & $10$ & \multicolumn{1}{r|}{$268.2944397$} & $0.0623432$ & \multicolumn{1}{r|}{$282.2923073$} & $0.0495864$ & $0.0002208$ & $45.612$\\
			\hline
			$1$ & $\nicefrac{1}{2}$ & $10$ & \multicolumn{1}{r|}{$1.7018557$} & $0.0060157$ & \multicolumn{1}{r|}{$1.7222757$} & $0.0118564$ & $0.0034929$ & $42.092$\\
			$2$ & $\nicefrac{1}{2}$ & $10$ & \multicolumn{1}{r|}{$2.8911286$} & $0.0027431$ & \multicolumn{1}{r|}{$2.9662336$} & $0.0253200$ & $0.0009248$ & $42.657$\\
			$5$ & $\nicefrac{1}{2}$ & $10$ & \multicolumn{1}{r|}{$14.2520916$} & $0.1356645$ & \multicolumn{1}{r|}{$15.1535149$} & $0.0594861$ & $0.0089527$ & $43.338$\\
			$10$ & $\nicefrac{1}{2}$ & $10$ & \multicolumn{1}{r|}{$201.6446228$} & $0.3009756$ & \multicolumn{1}{r|}{$229.6290127$} & $0.1218678$ & $0.0013107$ & $44.190$\\
			\hline
		\end{approxtabular}}
	\end{center}
	\caption{Numerical simulations for the approximation method in \cref{def:general_algorithm} in the case of the replicator-mutator PDEs in \eqref{eq:aniso_mutator_selector} in \cref{subsec:aniso_mutator_selector}
	where we assume for every
		$n,m,j\in\N$
	that
		$\mathcal D=\R^d$,
		$\xi^{n,m,j}=0$,
		$\gamma_m = \nicefrac{1}{100}$,
		and
		$M_n=1000$
	and where we assume for every
	  $\mathbf x\in\R^d$
	that
	  $\delta(\mathbf x)=(2\pi)^{-\nicefrac d2}\mathfrak t^{-d}\exp\bpr{-\tfrac{\norm{\mathbf x}^2}{2\mathfrak t^2}}$.
	\label{table:aniso_mutator_selector}
	}
\end{table}

\begin{figure}
	\centering
	\includegraphics[width=0.8\textwidth]{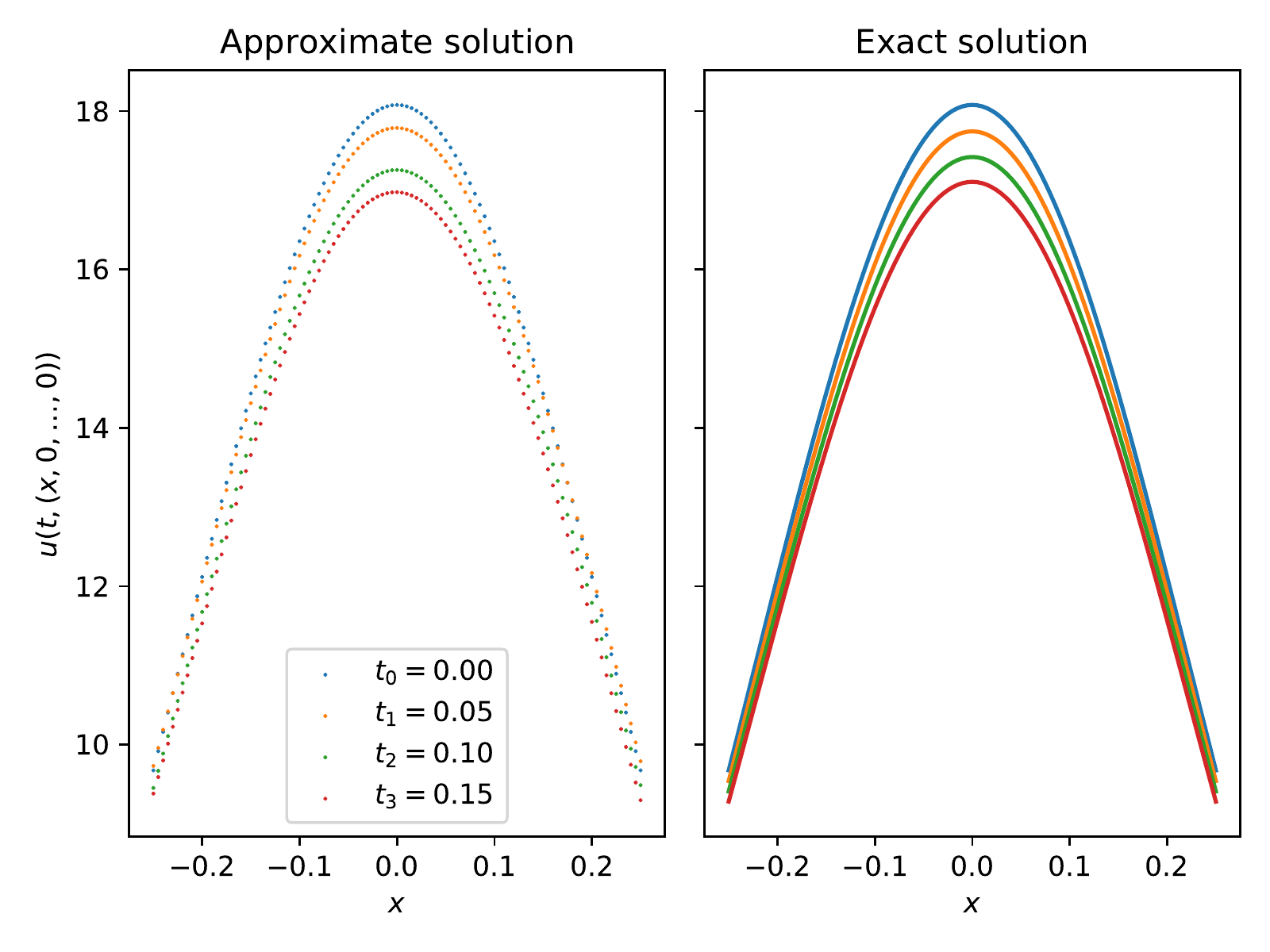}
	\caption{Plot of a machine learning-based approximation of the solution of the replicator-mutator PDE in \cref{eq:aniso_mutator_selector} in the case $d=5$, $T=\nicefrac12$, and $\mathcal D=\R^d$.
	The left-hand side shows a plot of $[-\nicefrac{1}{4}, \nicefrac{1}{4}] \ni x \mapsto \bV_n^{1,0}(\Theta^n_{M_n}(\omega),(x, 0, \dots,0)) \in \R$ for $n\in\{0,1,2,3\}$ and one realization $\omega\in\Omega$ where the functions $\R^d\ni x\mapsto \bV_n^{1,0}(\Theta^n_{M_n}(\omega),x)\in\R$ for $n\in\{0,1,2,3\}$, $\omega\in\Omega$ were computed via \cref{frame:adam} as an approximation of the solution of the PDE in \cref{eq:aniso_mutator_selector} with $d=5$, $T=\nicefrac12$, and $\mathcal D=[-\nicefrac 12,\nicefrac 12]^d$ where we take
		$M_1=M_2=\ldots=M_N=2000$,
		$\gamma_1=\gamma_2=\dots=\gamma_{2000}=\nicefrac{1}{200}$,
		and
		$\delta=\1_{\R^d}$
		and where we take
		$\xi^{n,m,j}\colon\Omega\to\R^d$, $n, m, j \in \N$,
		to be independent $\mathcal{U}_{[-\nicefrac{1}{2},\nicefrac{1}{2}]^d}$-distributed random variables.
	The right-hand side of \cref{fig:rep_mut} shows a plot of $[-\nicefrac{1}{4}, \nicefrac{1}{4}] \ni x \mapsto u(t, (x, 0,  \dots,0)) \in \R$ for $t\in\{0,0.05,0.1,0.15\}$ where $u$ is the exact solution of the PDE in \cref{eq:aniso_mutator_selector} with $d=5$, $T=\nicefrac12$, and $\mathcal D=\R^d$.
	\label{fig:rep_mut}}
\end{figure}

\begin{lemma}\label{lem:aniso_mutator_selector}
	Let
		$d \in \N$,
		$\mathfrak{u}_1,\mathfrak{u}_2, \dots,\mathfrak{u}_d\in \R$,
		$\mathfrak{m}_1,\mathfrak{m}_2, \dots,\mathfrak{m}_d,
		\mathfrak{s}_1,\mathfrak{s}_2, \dots,\mathfrak{s}_d \in (0,\infty)$,
	let
		$a \colon \R^d \to \R$
	satisfy 
		for every
			$x \in \R^d$
		that
			$a(x) = -\frac{1}{2} \norm{x}^2$,
	for every 
		$i \in \{1,2,\dots,d\}$
		let
			$\mathfrak{S}_i \colon [0,\infty) \to (0,\infty)$ and
			$\mathfrak{U}_i \colon [0,\infty) \to \R$
			satisfy for every
				$t \in [0,\infty)$
			that
			\begin{equation}\label{lem:S_U}
				\mathfrak{S}_i(t) 
				=
				\mathfrak{m}_i \br*{ \frac{\mathfrak{m}_i \sinh(\mathfrak{m}_i  t) + \mathfrak{s}_i \cosh(\mathfrak{m}_i t)}{\mathfrak{m}_i \cosh(\mathfrak{m}_i  t) + \mathfrak{s}_i \sinh(\mathfrak{m}_i t)}}
				\quad \text{and} \quad
				\mathfrak{U}_i(t)
				=
				\frac{\mathfrak{m}_i \mathfrak{u}_i}{\mathfrak{m}_i \cosh(\mathfrak{m}_i  t) + \mathfrak{s}_i \sinh(\mathfrak{m}_i t)}
				,
			\end{equation}
	and let
		$u \colon [0,\infty) \times \R^d \to \R $
	satisfy 
		for every
			$t \in [0,\infty)$,
			$x = (x_1,\dots,x_d)\in \R^d$
		that
		\begin{equation}\label{eq:lem_aniso}
			u(t,x)
			=
			(2\pi)^{- \nicefrac{d}{2}} \br*{ \smallprodl_{ i = 1 }^d \abs{\mathfrak{S}_i(t)}^{- \nicefrac{1}{2}} }     \exp\pr*{-\smallsuml_{i = 1}^d \frac{(x_i -\mathfrak{U}_i(t) )^2}{2\mathfrak{S}_i(t)}}
			.
		\end{equation}
	Then
	\begin{enumerate}[(i)]
		\item \label{lem:it_smoothness} 
			it holds that
				$u  \in C^{1,2}([0,\infty)\times\R^d,\R)$,
		\item \label{lem:it_u0} 
			it holds for every
				$x = (x_1,\dots,x_d)\in \R^d$
			that
			\begin{equation}
				u(0,x) = (2\pi)^{-\nicefrac{d}{2}} \br*{\smallprodl_{ i = 1 }^d \abs{\mathfrak{s}_i}^{-\nicefrac{1}{2}}} \exp\pr*{-\smallsuml_{i = 1}^d \frac{(x_i - \mathfrak{u}_i )^2}{2\mathfrak{s}_i}},
			\end{equation}
			and
	    \item \label{lem:it_eq} 
			it holds for every
				$t \in [0,\infty)$,
				$x = (x_1,\dots,x_d)\in \R^d$
			that
			\begin{equation}
				\label{eq:lemma_replicator_mutator}
				\bpr{\tfrac{\partial }{\partial t}u} (t,x)
				=
				u(t,x)\pr*{a(x) - \int_{\R^d} u(t,\mathbf{x})\, a(\mathbf{x}) \, \diff\mathbf{x} } + \smallsuml_{i=1}^{d} \tfrac{1}{2} \abs{\mathfrak{m}_i}^2\bpr{\tfrac{\partial^2 }{\partial x_i^2} u} (t,x)
				.
			\end{equation}
	\end{enumerate}
\end{lemma}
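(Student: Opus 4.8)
The plan is to exploit the fact that, for every $t\in[0,\infty)$, the function $\R^d\ni x\mapsto u(t,x)\in\R$ in \eqref{eq:lem_aniso} factorizes as a product of one-dimensional Gaussian densities, which reduces the verification of \eqref{eq:lemma_replicator_mutator} to a pair of scalar ordinary differential equations for each coordinate. First I would record the relevant properties of the auxiliary functions. For every $i\in\{1,2,\dots,d\}$ let $N_i,D_i\colon[0,\infty)\to\R$ satisfy for every $t\in[0,\infty)$ that $N_i(t)=\mathfrak m_i\sinh(\mathfrak m_it)+\mathfrak s_i\cosh(\mathfrak m_it)$ and $D_i(t)=\mathfrak m_i\cosh(\mathfrak m_it)+\mathfrak s_i\sinh(\mathfrak m_it)$. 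Then $N_i$ and $D_i$ are real-analytic, strictly positive on $[0,\infty)$ (since $\mathfrak m_i,\mathfrak s_i\in(0,\infty)$), and satisfy $N_i'=\mathfrak m_iD_i$ and $D_i'=\mathfrak m_iN_i$, and by \eqref{lem:S_U} we have $\mathfrak S_i=\mathfrak m_iN_i/D_i$ and $\mathfrak U_i=\mathfrak m_i\mathfrak u_i/D_i$. As $D_i>0$ this shows $\mathfrak S_i,\mathfrak U_i\in C^\infty([0,\infty),\R)$ with $\mathfrak S_i>0$, and the quotient rule together with $N_i'=\mathfrak m_iD_i$ and $D_i'=\mathfrak m_iN_i$ yields for every $t\in[0,\infty)$ that $\mathfrak S_i'(t)=(\mathfrak m_i)^2-(\mathfrak S_i(t))^2$ and $\mathfrak U_i'(t)=-\mathfrak S_i(t)\mathfrak U_i(t)$; evaluating at $t=0$ gives $\mathfrak S_i(0)=\mathfrak s_i$ and $\mathfrak U_i(0)=\mathfrak u_i$. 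Writing $u=\exp(P)$ with $P(t,x)=-\tfrac d2\ln(2\pi)-\tfrac12\smallsuml_{i=1}^d\ln\mathfrak S_i(t)-\smallsuml_{i=1}^d\frac{(x_i-\mathfrak U_i(t))^2}{2\mathfrak S_i(t)}$, which is infinitely differentiable on $[0,\infty)\times\R^d$ because $\mathfrak S_i>0$ and $\mathfrak S_i,\mathfrak U_i$ are smooth, then establishes \cref{lem:it_smoothness}, and \cref{lem:it_u0} follows from $\mathfrak S_i(0)=\mathfrak s_i$ and $\mathfrak U_i(0)=\mathfrak u_i$ via \eqref{eq:lem_aniso}.

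To handle the nonlocal term in \eqref{eq:lemma_replicator_mutator}, I would use that for every $t\in[0,\infty)$ the function $u(t,\cdot)$ is the density of a product of independent normal distributions with means $\mathfrak U_i(t)$ and variances $\mathfrak S_i(t)$, so that by Tonelli's theorem and the elementary moment identities $\int_\R(2\pi v)^{-\nicefrac12}\exp(-(y-w)^2/(2v))\,\diff y=1$ and $\int_\R y^2(2\pi v)^{-\nicefrac12}\exp(-(y-w)^2/(2v))\,\diff y=v+w^2$ one obtains $\int_{\R^d}u(t,\mathbf x)\,\diff\mathbf x=1$ and $\int_{\R^d}u(t,\mathbf x)a(\mathbf x)\,\diff\mathbf x=-\tfrac12\smallsuml_{i=1}^d\bpr{\mathfrak S_i(t)+(\mathfrak U_i(t))^2}$, and hence $a(x)-\int_{\R^d}u(t,\mathbf x)a(\mathbf x)\,\diff\mathbf x=\tfrac12\smallsuml_{i=1}^d\bpr{\mathfrak S_i(t)+(\mathfrak U_i(t))^2-(x_i)^2}$.

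It then remains to verify \eqref{eq:lemma_replicator_mutator} pointwise. Since $u>0$, I would divide the identity by $u(t,x)$ and use $u=\exp(P)$ to rewrite its left-hand side as $(\partial_tP)(t,x)$ and the quantity $(\partial_{x_i}^2u)(t,x)/u(t,x)$ as $((\partial_{x_i}P)(t,x))^2+(\partial_{x_i}^2P)(t,x)=\frac{(x_i-\mathfrak U_i(t))^2}{(\mathfrak S_i(t))^2}-\frac1{\mathfrak S_i(t)}$. Inserting $\partial_tP$ (obtained by differentiating $P$ in $t$, which produces terms carrying $\mathfrak S_i'(t)$ and $\mathfrak U_i'(t)$) together with the formula for $a(x)-\int u a$ from the previous step, one is left with an identity between polynomials in $(x_1,\dots,x_d)$ of degree two that splits as a sum over $i$; expanding $(x_i-\mathfrak U_i(t))^2$ and equating the coefficients of $(x_i)^2$, of $x_i$, and of the constant term, each of the three resulting scalar relations becomes a consequence of the two ODEs $\mathfrak S_i'=(\mathfrak m_i)^2-(\mathfrak S_i)^2$ and $\mathfrak U_i'=-\mathfrak S_i\mathfrak U_i$ from the first step: the coefficient of $(x_i)^2$ is exactly the Riccati equation for $\mathfrak S_i$, the coefficient of $x_i$ gives the equation for $\mathfrak U_i$ once the Riccati equation is used, and the constant term is then automatic. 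This proves \cref{lem:it_eq}.

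I expect the only mildly tedious point to be the bookkeeping in this coefficient comparison, and the one place where care is needed is confirming that the explicit closed forms in \eqref{lem:S_U} really do satisfy the two scalar ODEs; this is clean thanks to the identities $N_i'=\mathfrak m_iD_i$ and $D_i'=\mathfrak m_iN_i$, so no genuine obstacle arises and every sub-step reduces, after substituting the two ODEs, to an elementary polynomial identity.
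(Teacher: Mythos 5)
Your proposal is correct and follows essentially the same route as the paper's proof: both reduce everything to the two scalar ODEs $\mathfrak S_i'=\abs{\mathfrak m_i}^2-\abs{\mathfrak S_i}^2$ and $\mathfrak U_i'=-\mathfrak S_i\mathfrak U_i$ (which the paper also derives from \eqref{lem:S_U}), evaluate the non-local term via the Gaussian normalization and second-moment identities, and verify \eqref{eq:lemma_replicator_mutator} by direct algebraic matching. Your use of $u=\exp(P)$ and coefficient comparison in $x_i$ is only a cosmetic reorganization of the paper's product-rule computation, so no further comment is needed.
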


\begin{proof}[Proof of \cref{lem:aniso_mutator_selector}]
	First, note that 
		the fact that 
			for every
				$i \in \{1,2,\dots,d\}$
			it holds that
				$\mathfrak{S}_i \in C^{\infty}([0,\infty),(0,\infty))$,
		the fact that 
			for every
				$i \in \{1,2,\dots,d\}$
			it holds that
				$\mathfrak{U}_i \in C^{\infty}([0,\infty),\R) $,
		and \eqref{eq:lem_aniso}
	establish 
		\cref{lem:it_smoothness}.
	Moreover, observe that 
		the fact that 
			for every
				$i \in \{1,2,\dots,d\}$
			it holds that
				$\mathfrak{S}_i(0) = \mathfrak{s}_i$,
		the fact that 
			for every
				$i \in \{1,2,\dots,d\}$
			it holds that
				$\mathfrak{U}_i(0) = \mathfrak{u}_i$,
		and	\eqref{lem:S_U}
	prove  
		\cref{lem:it_u0}.
	Next note that 
		\eqref{eq:lem_aniso} 
	ensures that 
		for every
			$t \in [0,\infty)$,
			$x = (x_1,\dots,x_d)\in \R^d$
		it holds that
		\begin{equation}\label{eq:lem_u}
			u(t,x) 
			=
			\prod_{i=1}^d \br*{ (2\pi \mathfrak{S}_i(t))^{-\nicefrac{1}{2}} \exp\pr*{ - \frac{(x_i -\mathfrak{U}_i(t) )^2}{2\mathfrak{S}_i(t)}} }
			.
		\end{equation}
		The product rule 
		hence 
	implies that 
		for every
			$t \in [0,\infty)$,
			$x = (x_1,\dots,x_d)\in \R^d$
		it holds that
		\begin{equation}
		\begin{split}
			&
			\bpr{\tfrac{\partial }{\partial t}u} (t,x)  
			\\ & = 
			\frac{\partial}{\partial t} \pr*{\prod_{i=1}^d \br*{ (2\pi   \mathfrak{S}_i(t))^{-\nicefrac{1}{2}}  \exp \pr*{ - \frac{(x_i -\mathfrak{U}_i(t) )^2}{2\mathfrak{S}_i(t)}} } }
			\\ & = 
			\sum_{i=1}^d \Biggl[  \bbbbr{ \prod\nolimits_{ j \in \{ 1,\dots,d \} \backslash\{i\}} \pr*{ (2\pi \mathfrak{S}_j(t))^{-\nicefrac{1}{2}} \exp\pr*{ - \frac{(x_j -\mathfrak{U}_j(t) )^2}{2\mathfrak{S}_j(t)}} }}\\
			&\quad \cdot \br*{ \frac{\partial}{\partial t} \pr*{ (2\pi   \mathfrak{S}_i(t))^{-\nicefrac{1}{2}}   \exp\pr*{- \frac{(x_i -\mathfrak{U}_i(t) )^2}{2\mathfrak{S}_i(t)}} }  } \Biggr]
			.
		\end{split} 
		\end{equation}
		The chain rule,
		the product rule, 
		and \eqref{eq:lem_u} 
		therefore 
	show that 
		for every
			$t \in [0,\infty)$,
			$x = (x_1,\dots,x_d)\in \R^d$
		it holds that
		\begin{align}
			&\notag
			\bpr{\tfrac{\partial }{\partial t}u} (t,x)  
			\\ & = \notag
			\sum_{i=1}^d \Biggl[  \bbbbr{ \prod\nolimits_{ j \in \{ 1,\dots,d \} \backslash\{i\}} \pr*{ (2\pi \mathfrak{S}_j(t))^{-\nicefrac{1}{2}} \exp \pr*{ - \frac{(x_j -\mathfrak{U}_j(t) )^2}{2\mathfrak{S}_j(t)}} }}\\\notag
			&\quad \cdot \biggl[\pr*{ \frac{\partial}{\partial t} \pr*{ (2\pi   \mathfrak{S}_i(t))^{-\nicefrac{1}{2}} } } \, \exp\pr*{- \frac{(x_i -\mathfrak{U}_i(t) )^2}{2\mathfrak{S}_i(t)}} \\\notag
			&\quad + (2\pi   \mathfrak{S}_i(t))^{-\nicefrac{1}{2}}  \pr*{ \frac{\partial}{\partial t}  \pr*{ - \frac{(x_i -\mathfrak{U}_i(t) )^2}{2\mathfrak{S}_i(t)} } }\,\exp\pr*{- \frac{(x_i -\mathfrak{U}_i(t) )^2}{2\mathfrak{S}_i(t)}}  \biggr] \Biggr] 
			\\ &=  \label{lem:derivatives1}
			\sum_{i=1}^d \Biggl[  \bbbbr{ \prod\nolimits_{ j \in \{ 1, \dots,d \} \backslash\{i\}} \pr*{ (2\pi \mathfrak{S}_j(t))^{-\nicefrac{1}{2}} \exp\pr*{ - \frac{(x_j -\mathfrak{U}_j(t) )^2}{2\mathfrak{S}_j(t)}} }}\\\notag
			&\quad \cdot \biggl[   - (2\pi  \mathfrak{S}_i(t))^{-\nicefrac{1}{2}} \bbbbr{\frac{\pr*{ \tfrac{\partial }{\partial t}\mathfrak{S}_i} (t)}{2\mathfrak{S}_i(t)}} \exp\pr*{- \frac{(x_i -\mathfrak{U}_i(t) )^2}{2\mathfrak{S}_i(t)}} \\\notag
			&\quad +  (2\pi  \mathfrak{S}_i(t))^{-\nicefrac{1}{2}} \biggl(  \frac{ 2\bpr{\tfrac{\partial }{\partial t}\mathfrak{U}_i}(t)(x_i-\mathfrak{U_i}(t))}{2\mathfrak{S}_i(t)} \\\notag
			&\quad  + \frac{(x_i-\mathfrak{U}_i(t))^2 \bpr{\tfrac{\partial }{\partial t}\mathfrak{S}_i}(t)}{2\abs{\mathfrak{S}_i(t)}^{2}} \biggr) \exp\pr*{- \frac{(x_i -\mathfrak{U}_i(t) )^2}{2\mathfrak{S}_i(t)}} \biggr] \Biggr] 
			\\ &= \notag
			u(t,x) \bbbbbr{ \sum_{i=1}^d\pr*{ \frac{-\bpr{\tfrac{\partial }{\partial t}\mathfrak{S}_i} (t)}{2\mathfrak{S}_i(t)} +
			\frac{ 2\mathfrak{S}_i(t) \bpr{\tfrac{\partial }{\partial t}\mathfrak{U}_i}(t)(x_i-\mathfrak{U_i}(t)) + (x_i-\mathfrak{U}_i(t))^2 \bpr{\tfrac{\partial }{\partial t}\mathfrak{S}_i}(t)}{2\abs{\mathfrak{S}_i(t)}^{2}}} }
			.
		\end{align}
	Moreover, observe that 
		\eqref{lem:S_U}, 
		the chain rule, 
		and the product rule 
	ensure that 
		for every
			$i \in \{1, \dots,d\}$,
			$t \in [0,\infty)$
		it holds that
		\begin{equation}
		\begin{aligned}
			\bpr{\tfrac{\partial }{\partial t}\mathfrak{U}_i}(t) &= \frac{\partial }{\partial t} \pr*{ \frac{\mathfrak{m}_i \mathfrak{u}_i}{\mathfrak{m}_i \cosh(\mathfrak{m}_i  t) + \mathfrak{s}_i \sinh(\mathfrak{m}_i t)} }
			\\ &= 
			- \abs{\mathfrak{m}_i}^2 \mathfrak{u}_i \br*{ \frac{\mathfrak{m}_i \sinh(\mathfrak{m}_i  t) + \mathfrak{s}_i \cosh(\mathfrak{m}_i t)}{\br*{ \mathfrak{m}_i \cosh(\mathfrak{m}_i  t) + \mathfrak{s}_i \sinh(\mathfrak{m}_i t)}^2} }
			\\ &= 
			- \mathfrak{S}_i(t)    \mathfrak{U}_i(t) 
		\end{aligned}
		\end{equation}
		and
		\begin{equation}
		\begin{aligned}
			&\bpr{\tfrac{\partial }{\partial t}\mathfrak{S}_i}(t) 
			\\&= \frac{\partial }{\partial t} \pr*{ \mathfrak{m}_i \br*{ \frac{\mathfrak{m}_i \sinh(\mathfrak{m}_i  t) + \mathfrak{s}_i \cosh(\mathfrak{m}_i t)}{\mathfrak{m}_i \cosh(\mathfrak{m}_i  t) + \mathfrak{s}_i \sinh(\mathfrak{m}_i t)}}} 
			\\ &= 
			\abs{\mathfrak{m}_i}^2 \br*{ \frac{\mathfrak{m}_i \cosh(\mathfrak{m}_i  t) + \mathfrak{s}_i \sinh(\mathfrak{m}_i t)}{\mathfrak{m}_i \cosh(\mathfrak{m}_i  t) + \mathfrak{s}_i \sinh(\mathfrak{m}_i t)}} - \abs{\mathfrak{m}_i}^2\br*{ \frac{\mathfrak{m}_i \sinh(\mathfrak{m}_i  t) + \mathfrak{s}_i \cosh(\mathfrak{m}_i t)}{ \mathfrak{m}_i \cosh(\mathfrak{m}_i  t) + \mathfrak{s}_i \sinh(\mathfrak{m}_i t)} }^2
			\\ &= 
			\abs{\mathfrak{m}_i}^2  - \abs{\mathfrak{S}_i(t)}^2
			.
		\end{aligned}
		\end{equation}
	Combining 
		this 
	with 
		\eqref{lem:derivatives1} 
	implies that for every
		$i \in \{1, 2,\dots,d\}$,
		$t \in [0,\infty)$
	it holds that
	\begin{align}
		\nonumber \bpr{\tfrac{\partial }{\partial t}u} (t,x)  
		&= 
		\frac{u(t,x)}{2} \sum_{i=1}^d\Biggl[ \frac{-\bbr{\abs{\mathfrak{m}_i}^2  - \abs{\mathfrak{S}_i(t)}^2}}{\mathfrak{S}_i(t)} \\
		\nonumber &\quad + \frac{ 2\abs{\mathfrak{S}_i(t)}^2  \mathfrak{U}_i(t) (\mathfrak{U_i}(t) - x_i) + (x_i-\mathfrak{U}_i(t))^2 (\abs{\mathfrak{m}_i}^2  - \abs{\mathfrak{S}_i(t)}^2)}{\abs{\mathfrak{S}_i(t)}^{2}}\Biggr]
		\\ &= 
		\label{lem:derivatives1*}
		\frac{u(t,x)}{2} \sum_{i=1}^d\Biggl[ \abs{\mathfrak{m}_i}^2  \pr*{ \bbbpr{\frac{x_i - \mathfrak{U}_i(t)}{\mathfrak{S}_i(t)}}^{2} - \frac{1}{\mathfrak{S}_i(t)} } \\
		\nonumber
		&\quad + \mathfrak{S}_i(t) + 2\pr*{\abs{\mathfrak{U}_i(t)}^2 - \mathfrak{U}_i(t)\, x_i} - \pr*{\abs{x_i}^2 -2 \mathfrak{U}_i(t) \, x_i + \abs{\mathfrak{U}_i(t)}^2 }  \Biggr] 
		\\ &= 
		\nonumber
		\frac{u(t,x)}{2} \sum_{i=1}^d \br*{  \abs{\mathfrak{m}_i}^2  \pr*{ \bbbpr{\frac{x_i - \mathfrak{U}_i(t)}{\mathfrak{S}_i(t)}}^{2} - \frac{1}{\mathfrak{S}_i(t)} } + \mathfrak{S}_i(t) + \abs{\mathfrak{U}_i(t)}^2 - \abs{x_i}^2 }
		.
	\end{align}
	Furthermore, note that 
		\eqref{eq:lem_u} 
		and the product rule 
	show that 
		for every
			$i \in \{1,2, \dots,d\}$,
			$t \in [0,\infty)$,
			$x = (x_1, \dots,x_d)\in \R^d$
		it holds that 
		\begin{equation}
		\begin{split}
			\bpr{ \tfrac{\partial }{\partial x_i} u}(t,x)
			&=  
			\frac{\partial }{\partial x_i}  
			\br*{\prod_{j = 1}^d\br*{(2\pi   \mathfrak{S}_j(t))^{-\nicefrac{1}{2}}   \exp\pr*{ - \frac{(x_j -\mathfrak{U}_j(t) )^2}{2\mathfrak{S}_j(t)}} } }
			\\ &= 
			\br*{ \frac{\partial }{\partial x_i} 
				\br*{(2\pi   \mathfrak{S}_i(t))^{-\nicefrac{1}{2}}   \exp\pr*{ - \frac{(x_i -\mathfrak{U}_i(t) )^2}{2\mathfrak{S}_i(t)} } } }\\
				&\quad \cdot \prod_{ j \in \{ 1,2, \dots,d \} \backslash\{i\}} \br*{ (2\pi   \mathfrak{S}_j(t))^{-\nicefrac{1}{2}}   \exp\pr*{ - \frac{(x_j -\mathfrak{U}_j(t) )^2}{2\mathfrak{S}_j(t)} } }
			\\ &= 
			- u(t,x) \bbbpr{ \frac{x_i - \mathfrak{U}_i(t)}{\mathfrak{S}_i(t)} } = u(t,x) \bbbpr{ \frac{\mathfrak{U}_i(t) - x_i }{\mathfrak{S}_i(t)} }
			.
		\end{split}
		\end{equation}
		The product rule 
		therefore 
	assures that 
		for every
			$i \in \{1,2, \dots,d\}$,
			$t \in [0,\infty)$,
			$x = (x_1, \dots,x_d)\in \R^d$
		it holds that 
		\begin{equation}
		\begin{split}
			&
			\bpr{ \tfrac{\partial^2 }{\partial x_i^2} u }(t,x) 
			= 
			\frac{\partial }{\partial x_i} \pr*{u(t,x) \bbbpr{ \frac{ \mathfrak{U}_i(t) - x_i}{\mathfrak{S}_i(t)} }}
			\\ &=
			\bpr{\tfrac{\partial}{\partial x_i} u}(t,x)\bbbpr{ \frac{\mathfrak{U}_i(t) - x_i }{\mathfrak{S}_i(t)} } - \frac{u(t,x)}{\mathfrak S_i(t)}
			= 
			u(t,x) \br*{ \bbbpr{\frac{x_i - \mathfrak{U}_i(t)}{\mathfrak{S}_i(t)}}^{2} - \frac{1}{\mathfrak{S}_i(t)} }
			.
		\end{split}
		\end{equation}
		Hence, 
	we obtain that 
		for every
			$t \in [0,\infty)$,
			$x = (x_1, \dots,x_d)\in \R^d$
		it holds that 
		\begin{equation}\label{lem:derivatives2}
		\begin{split}
			&
			\sum_{i=1}^d \br*{ \tfrac{1}{2} \abs{\mathfrak{m}_i}^2\bpr{\tfrac{\partial^2 }{\partial x_i^2} u} (t,x) } 
			= 
			\frac{u(t,x)}{2}  \sum_{i=1}^d \br*{ \abs{\mathfrak{m}_i}^2 \pr*{ \bbbpr{\frac{x_i - \mathfrak{U}_i(t)}{\mathfrak{S}_i(t)}}^{2} - \frac{1}{\mathfrak{S}_i(t)} }}
			.
		\end{split}
		\end{equation}
	Next observe that 
		\eqref{eq:lem_u} 
		and Fubini's theorem
	ensure that for every
		$t \in [0,\infty)$,
		$x = (x_1, \dots,x_d)\in \R^d$
	it holds that
	\begin{equation}
	\begin{split}
		&u(t,x)\pr*{ a(x) - \int_{\R^d} u(t,\mathbf{x})\, a(\mathbf{x}) \, \diff\mathbf{x} } 
		\\ &= u(t,x) \pr*{ - \frac{1}{2} \br*{\sum_{i=1}^d \abs{x_i}^2 } - \int_{\R^d} - \frac{1}{2} \br*{ \sum_{i=1}^d \abs{\mathbf{x}_i}^2 } u(t,\mathbf{x}) \, \diff \mathbf{x} }
		\\ &= \frac{u(t,x)}{2} \Biggl( - \br*{\sum_{i=1}^d \abs{x_i}^2 } \\
		&\quad + \sum_{i=1}^d \biggl[ \int_\R \abs{\mathbf{x}_i}^2 \, (2\pi    \mathfrak{S}_i(t))^{-\nicefrac{1}{2}}    \exp\pr*{ - \frac{(\mathbf{x}_i -\mathfrak{U}_i(t) )^2}{2\mathfrak{S}_i(t)}}\, \diff\mathbf{x}_i  \\
		&\quad \cdot \bbbpr{\prod\nolimits_{ j \in \{ 1,2, \dots,d \} \backslash\{i\}} \int_\R (2\pi    \mathfrak{S}_j(t))^{-\nicefrac{1}{2}}    \exp\pr*{ - \frac{(\mathbf{x}_j -\mathfrak{U}_j(t) )^2}{2\mathfrak{S}_j(t)}}\, \diff{\mathbf x_j} } \biggr] \Biggr)
		.
	\end{split}
	\end{equation}
		This 
		and the fact that
			for every
				$i \in \{1,2, \dots, d\}$,
				$t \in [0,\infty)$
			it holds that
			\begin{equation}
				\int_\R (2\pi    \mathfrak{S}_i(t))^{-\nicefrac{1}{2}}    \exp\pr*{ - \frac{( x -\mathfrak{U}_i(t) )^2}{2\mathfrak{S}_i(t)}}\, \diff x 
				=
				1
			\end{equation}
	imply that 
		for every
			$t \in [0,\infty)$,
			$x = (x_1, \dots,x_d)\in \R^d$
		it holds that 
		\begin{equation}\label{lem:derivatives4}
		\begin{split}
			&
			u(t,x)\pr*{ a(x) - \int_{\R^d} u(t,\mathbf{x})\, a(\mathbf{x}) \, \diff\mathbf{x} } 
			\\ &= 
			\frac{u(t,x)}{2} \sum_{i=1}^d \br*{- \abs{x_i}^2 + \int_\R  \abs{\mathbf{x}_i}^2 \, (2\pi    \mathfrak{S}_i(t))^{-\nicefrac{1}{2}}    \exp\pr*{ - \frac{(\mathbf{x}_i -\mathfrak{U}_i(t) )^2}{2\mathfrak{S}_i(t)}} \,\diff\mathbf{x}_i }
			.
		\end{split}
		\end{equation}
	Next observe that 
		the integral transformation theorem 
	demonstrates that 
		for every
			$i \in \{1,2, \dots,d\}$,
			$t \in [0,\infty)$
		it holds that
		\begin{equation}
		\begin{aligned}
			&
			\int_\R  x^2 \br*{ (2\pi    \mathfrak{S}_i(t))^{-\nicefrac{1}{2}}    \exp\pr*{ - \frac{( x -\mathfrak{U}_i(t) )^2}{2\mathfrak{S}_i(t)}} } \diff x 
			\\ &= \int_\R   (x + \mathfrak{U}_i(t))^2 \br*{ (2\pi    \mathfrak{S}_i(t))^{-\nicefrac{1}{2}}    \exp\pr*{ - \frac{x^2}{2\mathfrak{S}_i(t)}} } \diff x\\
			&= \int_\R   x ^2 \br*{ (2\pi    \mathfrak{S}_i(t))^{-\nicefrac{1}{2}}    \exp\pr*{ - \frac{x^2}{2\mathfrak{S}_i(t)}} }  \diff x 
			\\&\quad 
			+ \int_\R  \abs{\mathfrak{U}_i(t)}^2 \br*{ (2\pi    \mathfrak{S}_i(t))^{-\nicefrac{1}{2}}    \exp\pr*{ - \frac{x^2}{2\mathfrak{S}_i(t)}} } \diff x
			\\ &=
			\mathfrak{S}_i(t) + \abs{\mathfrak{U}_i(t)}^2
			.
		\end{aligned}
		\end{equation}
	Combining 
		this 
	with 
		\eqref{lem:derivatives4} 
	ensures that 
		for every
			$t \in [0,\infty)$,
			$x = (x_1, \dots,x_d)\in \R^d$
		it holds that 
		\begin{equation}\label{lem:derivatives3}
		\begin{split}
			&
			u(t,x)\pr*{ a(x) - \int_{\R^d} u(t,\mathbf{x})\, a(\mathbf{x}) \, \diff\mathbf{x} }
			=
			\frac{u(t,x)}{2} \sum_{i=1}^d \pr*{ \mathfrak{S}_i(t) + \abs{\mathfrak{U}_i(t)}^2 - \abs{x_i}^2 } 
			.
		\end{split}
		\end{equation}
		This 
		and \eqref{lem:derivatives2} 
	demonstrate that 
		for every
			$t \in [0,\infty)$,
			$x = (x_1, \dots,x_d)\in \R^d$
		it holds that
		\begin{equation}
		\begin{aligned}
			& u(t,x)\pr*{a(x) - \int_{D} u(t,\mathbf{x})\, a(\mathbf{x}) \, \diff\mathbf{x} } + \smallsuml_{i=1}^{d} \tfrac{1}{2} \abs{\mathfrak{m}_i}^2 \bpr{\tfrac{\partial^2 }{\partial x_i^2} u} (t,x) 
			\\ &= 
			\frac{u(t,x)}{2} \sum_{i=1}^d \br*{  \abs{\mathfrak{m}_i}^2  \pr*{ \bbbpr{\frac{x_i - \mathfrak{U}_i(t)}{\mathfrak{S}_i(t)}}^{2} - \frac{1}{\mathfrak{S}_i(t)} } + \mathfrak{S}_i(t) + \abs{\mathfrak{U}_i(t)}^2 - \abs{x_i}^2 }
			.
		\end{aligned}
		\end{equation}
	Combining 
		this 
	with 
		\eqref{lem:derivatives1*} 
	proves \cref{lem:it_eq}. 
	The proof of \cref{lem:aniso_mutator_selector} is thus complete.
\end{proof}

\subsection{Allen--Cahn PDEs with conservation of mass}
\label{subsec:allen_cahn}
In this subsection we use the machine learning-based approximation method in \cref{frame:adam}
to approximately calculate the solutions of certain Allen--Cahn PDEs with cubic nonlinearity, conservation of mass and no-flux boundary conditions (cf., e.g., Rubinstein \& Sternberg \cite{RUBINSTEIN1992}).

Assume 
	\cref{frame:adam}, 
let
	$\epsilon\in(0,\infty)$
satisfy
	$\epsilon = \tfrac{1}{10}$,
assume that
	$d\in\{1,2,5,10\}$,
	$\D = [-\nicefrac12,\nicefrac12]^d$,
	$T\in\{\nicefrac{1}{5},\nicefrac{1}{2},1\}$,
	$N=10$,
	$K_1 = K_2 = \ldots = K_N= 5$, and
	$M_1 = M_2 = \ldots = M_N = 500$,
assume that 
	$\xi^{n,m,j}, n, m, j \in \N$,
	are independent $\mathcal{U}_{D}$-distributed random variables,
assume 
	for every 
		$m \in \N$
	that
		$\gamma_m =10^{-2}$,
	and
assume 
	for every 
		$s,t \in [0,T]$, 
		$x,\mathbf{x}
		\in \D$, 
		$y,\mathbf{y} \in \R$,
		$v\in\R^d$,
		$A \in \mathcal{B}(D)$
	that
		$\nu_x(A) = \int_A \diff\mathbf{x}$,
		$g(x)= \exp (-\tfrac{1}{4}\norm{x}^2)$,
		$\mu(x)=(0, \dots, 0)$,
		$\sigma(x) v = \epsilon v$, 
		$f(t,x,\mathbf{x},y,\mathbf{y}
		)=  y - y^3 - \pr*{ \mathbf{y} - \mathbf{y}^3}$, and
	\begin{equation}
		\label{eq:Hallencahn}
		H(t,s,x,v) 
		=
		R(x,x+\mu(x)(t-s)+\sigma(x)v)
		=
		R(x,x+\epsilon v)
	\end{equation}
	(cf.\ \eqref{Y-algo-spez} and \eqref{eq:FormalXapprox}).
The solution 
	$u\colon[0,T]\times \D \to \R$ 
	of the PDE in \eqref{eq:PDE-Examples} then satisfies that 
		for every
		$t\in (0,T]$, 
		$x\in\partial_\D$
	it holds that
		$\ang*{\mathbf{n}(x) ,(\nabla_x u)(t,x)} = 0$
	and that for every
			$t\in [0,T]$, 
			$x\in\D$ 
		it holds that 
			$u(0,x)=\exp (-\tfrac{1}{4}\norm{x}^2)$ and
		\begin{equation}
			\bpr{\tfrac{\partial}{\partial t}u}(t,x)
			=
			\tfrac{\epsilon^2}{2} (\Delta_x u)(t,x) + u(t,x) - [u(t,x)]^3 - \int_{[-\nicefrac12,\nicefrac12]^d} u(t,\mathbf{x}) - [u(t,\mathbf{x})]^3 \,\diff\mathbf{x}
			.
		\label{eq:allen_cahn}
		\end{equation}
In \cref{table:allen_cahn}
we use the machine learning-based approximation method
in \cref{frame:adam}
to approximately calculate
the mean of %
$
\bV^{1,0}_N(\Theta^N_{M_N},\allowbreak (0,\ldots,0))
$,
the standard deviation of %
$
\bV^{1,0}_N(\Theta^N_{M_N},\allowbreak (0,\ldots,0))
$,
the relative $ L^1 $-approximation error associated to %
$
\bV^{1,0}_N(\Theta^N_{M_N},\allowbreak (0,\ldots,\allowbreak 0))
$,
the uncorrected sample standard deviation of the approximation error associated to %
$
\bV^{1,0}_N(\Theta^N_{M_N},\allowbreak (0,\ldots,0))
$,
and the average runtime in seconds needed for calculating one realization of $
\bV^{1,0}_N(\Theta^N_{M_N},\allowbreak (0,\ldots,0))
$
based on $5$ independent realizations (5 independent runs).
The reference value, which is used as an approximation for the unknown value $u(T,(0,\ldots,0))$
of the exact solution of \eqref{eq:allen_cahn}, has been calculated via the MLP approximation method for non-local nonlinear PDEs in \cref{frame:mlpsetting} (cf.~\cref{exampleMLP:allen_cahn} and Beck et al.~\cite[Remark~3.3]{Beck2017a}).

\begin{table}%
	\begin{center}
		\resizebox{\textwidth}{!}{\begin{approxtabular}
			$1$ & $\nicefrac{1}{5}$ & $10$ & $0.9947184$ & $0.0021832$ & $0.9932255$ & $0.0015709$ & $0.0021380$ & $31.417$\\
			$2$ & $\nicefrac{1}{5}$ & $10$ & $0.9908873$ & $0.0027061$ & $0.9868883$ & $0.0040521$ & $0.0027421$ & $35.069$\\
			$5$ & $\nicefrac{1}{5}$ & $10$ & $0.9942151$ & $0.0052064$ & $0.9710707$ & $0.0238340$ & $0.0053615$ & $38.363$\\
			$10$ & $\nicefrac{1}{5}$ & $10$ & $0.9792556$ & $0.0203935$ & $0.9514115$ & $0.0292661$ & $0.0214350$ & $42.782$\\\hline
			$1$ & $\nicefrac{1}{2}$ & $10$ & $0.9870476$ & $0.0014673$ & $0.9880013$ & $0.0014996$ & $0.0007477$ & $30.297$\\
			$2$ & $\nicefrac{1}{2}$ & $10$ & $0.9763564$ & $0.0030895$ & $0.9750274$ & $0.0024841$ & $0.0021561$ & $34.922$\\
			$5$ & $\nicefrac{1}{2}$ & $10$ & $0.9518845$ & $0.0051304$ & $0.9431354$ & $0.0092766$ & $0.0054398$ & $37.963$\\
			$10$ & $\nicefrac{1}{2}$ & $10$ & $0.9249420$ & $0.0052786$ & $0.9063239$ & $0.0205424$ & $0.0058242$ & $43.139$\\\hline
			$1$ & $1$ & $10$ & $0.9823494$ & $0.0003647$ & $0.9780817$ & $0.0043633$ & $0.0003729$ & $29.250$\\
			$2$ & $1$ & $10$ & $0.9659823$ & $0.0004128$ & $0.9658025$ & $0.0003195$ & $0.0003137$ & $34.485$\\
			$5$ & $1$ & $10$ & $0.9209547$ & $0.0019223$ & $0.9158821$ & $0.0055385$ & $0.0020988$ & $39.318$\\
			$10$ & $1$ & $10$ & $0.8693402$ & $0.0029947$ & $0.8683143$ & $0.0030165$ & $0.0015052$ & $44.258$\\			
			\hline
		\end{approxtabular}}
	\end{center}
	\caption{Numerical simulations for the approximation method in \cref{def:general_algorithm} in the case of the Allen--Cahn PDEs with conservation of mass in \eqref{eq:allen_cahn} in \cref{subsec:allen_cahn}.
	\label{table:allen_cahn}}
\end{table}

\section{{\sc Julia} source codes}
\label{sec:sourcecodes}

\subsection{General package for high-dimensional PDE approximations}
\codefile{srccode_julia/HighDimPDE.jl}
\codefile{srccode_julia/MCSample.jl}
\codefile{srccode_julia/reflect.jl}
\subsection{Implementation of the machine learning-based approximation method}
\codefile{srccode_julia/DeepSplitting.jl}
\subsection{Implementation of the multilevel Picard \texorpdfstring{ap\-prox\-i\-ma\-tion meth\-od}{approximation method}}
\codefile{srccode_julia/MLP.jl}

\subsection{{\sc Julia} source codes associated to \texorpdfstring{\cref{subsec:fisherKPP_neumann_r}}{Section 5.1}}
\codefile{srccode_julia/HighDimPDE_examples/DeepSplitting_fisherkpp_neumann.jl}
\codefile{srccode_julia/HighDimPDE_examples/MLP_fisherkpp_neumann.jl}

\subsection{{\sc Julia} source codes associated to \texorpdfstring{\cref{subsec:nonlocalcompPDE}}{Section 5.2}}
\codefile{srccode_julia/HighDimPDE_examples/DeepSplitting_nonlocal_comp.jl}
\codefile{srccode_julia/HighDimPDE_examples/MLP_nonlocal_comp.jl}

\subsection{{\sc Julia} source codes associated to \texorpdfstring{\cref{subsec:sinegordon_nonlocal}}{Section 5.3}}
\codefile{srccode_julia/HighDimPDE_examples/DeepSplitting_nonlocal_sinegordon.jl}
\codefile{srccode_julia/HighDimPDE_examples/MLP_nonlocal_sinegordon.jl}

\subsection{{\sc Julia} source codes associated to \texorpdfstring{\cref{subsec:aniso_mutator_selector}}{Section 5.4}}
\codefile{srccode_julia/HighDimPDE_examples/DeepSplitting_rep_mut.jl}
\codefile{srccode_julia/HighDimPDE_examples/MLP_rep_mut.jl}

\subsection{{\sc Julia} source codes associated to \texorpdfstring{\cref{subsec:allen_cahn}}{Section 5.5}}
\codefile{srccode_julia/HighDimPDE_examples/DeepSplitting_allencahn_neumann.jl}
\codefile{srccode_julia/HighDimPDE_examples/MLP_allencahn_neumann.jl}

\section*{Acknowledgments}

This project has been funded by the Deutsche Forschungsgemeinschaft (DFG, German Research Foundation) under Germany's Excellence Strategy EXC 2044-390685587, Mathematics M{\"u}nster: Dynamics-Geometry-Structure.
This project has been partially supported by the startup fund project of Shenzhen Research Institute of Big Data under grant No.~T00120220001.

\bibliographystyle{acm}
\bibliography{bibfile}

\end{document}